\DeclareMathOperator{\diag}{diag}
\newcommand{\bmat}{\left[ \begin{matrix}}
\newcommand{\emat}{\end{matrix} \right]}
\newcommand{\innerprod}[2]{\langle{#1},\,{#2}\rangle}
\DeclareMathOperator{\trace}{tr}
\DeclareMathOperator{\adjugate}{adj}
\DeclareMathOperator{\E}{{\mathbb E}}
\newcommand{\Rbb}{\mathbb R}
\newcommand{\Cbb}{\mathbb C}
\newcommand{\Zbb}{\mathbb Z}
\newcommand{\Tbb}{\mathbb T}
\newcommand{\yb}{\mathbf  y}
\newcommand{\zb}{\mathbf  z}
\newcommand{\vb}{\mathbf  v}
\newcommand{\ab}{\mathbf a}
\newcommand{\ub}{\mathbf  u}
\newcommand{\qb}{\mathbf q}
\newcommand{\tb}{\mathbf t}
\newcommand{\kb}{\mathbf k}
\newcommand{\zerob}{\mathbf 0}
\newcommand{\Qb}{\mathbf Q}
\newcommand{\Yb}{\mathbf Y}
\newcommand{\Xb}{\mathbf  X}
\newcommand{\thetab}{\boldsymbol{\theta}}
\newcommand{\Sigmab}{\boldsymbol{\Sigma}}
\DeclareMathOperator{\interior}{int}
\newcommand{\Hfrak}{\mathfrak{H}}
\newcommand{\Cfrak}{\mathfrak{C}}
\newcommand{\Sfrak}{\mathfrak{S}}
\newcommand{\Lscr}{\mathscr{L}}
\newcommand{\Lcal}{\mathcal{L}}
\newcommand{\Zcal}{\mathcal{Z}}
\newcommand{\Dcal}{\mathcal{D}}
\renewcommand{\d}{\mathrm{d}}
\renewcommand{\a}{\mathrm{a}}
\newcommand{\s}{\mathrm{s}}
\newcommand{\m}{\mu} 
\crefname{hypothesis}{Hypothesis}{Hypotheses}
\title{M$^2$-Spectral Estimation: A Flexible Approach Ensuring Rational Solutions\thanks{Submitted to the editors April 30, 2020.
\funding{This work was supported by the SID project ``A Multidimensional and Multivariate Moment Problem Theory for Target Parameter Estimation in Automotive Radars'' (ZORZ\_SID19\_01) funded by the Department of Information Engineering of the University of Padova. The first author was also partially supported by the ``Hundred-Talent Program'' of the Sun Yat-sen University.}}}
\author{Bin Zhu\thanks{School of Intelligent Systems Engineering, Sun Yat-sen University, Waihuan East Road 132, 510006 Guangzhou, China (\email{zhub26@mail.sysu.edu.cn}).}
\and Augusto Ferrante\thanks{Department of Information Engineering, University of Padova, Via Gradenigo 6/B, 35131 Padova, Italy (\email{augusto@dei.unipd.it}, \email{zorzimat@dei.unipd.it}).}
\and Johan Karlsson\thanks{Division of Optimization and Systems Theory, Department of Mathematics, KTH Royal Institute of Technology, 10044 Stockholm, Sweden (\email{johan.karlsson@math.kth.se}).}
\and Mattia Zorzi\footnotemark[3]
}
\begin{document}

\maketitle

\begin{abstract}
This paper concerns a spectral estimation problem for multivariate (i.e., vector-valued) signals defined on a multidimensional domain, abbreviated as M$^2$.
The problem is posed as solving a finite number of trigonometric moment equations for a nonnegative matricial measure, which is well known as the \emph{covariance extension problem} in the literature of systems and control.
This inverse problem and its various generalizations have been extensively studied in the past three decades, and they find applications in diverse fields such as modeling and system identification, signal and image processing, robust control, circuit theory, etc.
In this paper, we address the challenging M$^2$ version of the problem, and elaborate on a solution technique via convex optimization with the $\tau$-divergence family.
{As a major contribution of this work}, we show that by properly choosing the parameter of the divergence index, the optimal spectrum is {a rational function}, that is, the solution is a spectral density which {can be represented by a finite-dimensional system, as} desired in {many} practical applications.
\end{abstract}

\begin{keywords}
Multidimensional matrix covariance extension, tau divergence, trigonometric moment problem, spectral analysis.
\end{keywords}

\begin{AMS}
	42A70, 30E05, 47A57, 60G12
\end{AMS}

\section{Introduction}\label{sec:intro}
In this paper, we address the problem of estimating a multidimensional and multivariate (M$^2$) spectrum which characterizes a 
{second-order stationary} random field. {Such models are particularly useful when considering high-dimensional stochastic processes that are stationary with respect to some of the dimensions which are then taken as the domain. Applications of this appear in, e.g., system identification, image processing, and radar signal processing
\cite{bose2003multidimensional,ZFKZ2019fusion}}. 
{Here we deal with} the spectral estimation problem using a moment-based approach. Assume that we have computed from the data a finite number of covariances of the random field, and a prior is available, i.e., a spectrum embedding the a priori information that we have. In particular, if we have no prior knowledge, we can take the spectrum of a white noise. Then, the M$^2$ spectral estimator is the closest spectrum to the prior satisfying the moment conditions. The closeness between the solution and the prior is measured by a divergence index (or pseudo-distance). In \cite{ZFKZ2019M2}, we showed that such a problem is well-posed for a periodic field using the Itakura-Saito distance \cite{enqvist2008minimal,FMP-12}. {For a comprehensive treatment of multidimensional moment problems, we refer the reader to some recent books \cite{lasserre2010moments,bakonyi2011matrix,schmudgen2017moment}.}

The moment-based approach for spectral estimation with prior has been widely studied in the unidimensional and scalar case \cite{BGL-THREE-00,enqvist2004aconvex,BGL-98,LPcirculant-13,karlsson2010theinverse,zhu2018wellposed}, as well as its multivariate extension \cite{FMP-12,FPR-08,RFP-09,lindquist2013onthemultivariate,Zhu-Baggio-19,Zorzi-F-12,FPZ-12}. These optimization problems differ by the considered divergence {indices} and do admit a unique spectral density as solution. It is worth noting that different divergence indices lead to solutions with different properties, e.g., their {complexities} in terms of the McMillan degree. Interestingly, these divergences are connected through the $\alpha$-divergence \cite{Z-14rat}, the $\beta$-divergence \cite{Z-14} and the $\tau$-divergence \cite{Z-15,zorzi2015interpretation}. The moment-based approach equipped with a divergence family is very flexible in the sense that we obtain a family of solutions, each corresponding to a particular value of the parameter of the family, and we can choose one of them depending on the features that we would like to have.

The multidimensional extension of the moment-based spectral estimation approach, however, has been less studied. We mention \cite{Georgiou-06} and a recent work \cite{ZFKZ2019M2} in which discrete spectra are considered.
While in the latter case it is possible to show that there exists a unique spectral density which solves the optimization problem, the problem becomes more challenging when the spectrum is supported on the whole multidimensional frequency domain. One of the difficulties due to the multiple dimensionality can be seen from \cite{ringh2015multidimensional,KLR-16multidimensional,RKL-16multidimensional,ringh2018multidimensional} where
the scalar multidimensional problem has been investigated. It is shown that in general, the solution to the constrained optimization problem is not necessarily a spectral density, but rather a \emph{spectral measure} that may contain a singular part. The latter is not desirable in most applications. Indeed, as it has been shown in \cite{KLR-16multidimensional}, it is difficult to characterize the singular measure and it is in general not unique.

The aim of this paper is to propose a M$^2$ spectral estimator based on the $\tau$-divergence family.
We show {via duality analysis} that the corresponding {dual optimization} problem admits a unique solution.
Furthermore, the flexibility using the $\tau$-divergence family guarantees that the family of solutions to the primal problem contains at least one \emph{rational} M$^2$ spectral density {(see \cref{sec:integrability})}.
The latter is the most important result {in this work} and it has never been addressed in the multidimensional case.
The significance of rationality is well understood in the unidimensional case since one can construct via \emph{spectral factorization} a digital filter which produces a process with prescribed second-order statistics when fed with white noise. Although spectral factorization is not always possible in the multidimensional setting, rationality still seems to be a key ingredient towards a finite-dimensional realization theory.

The outline of the paper is as follows. \Cref{sec:background} gives some background material on moment-based approach for spectral estimation. In \cref{sec:problem} we formulate the M$^2$ spectral estimation problem using the $\tau$-divergence. In \cref{sec:duality} we derive the corresponding dual problem. In \cref{sec:solution_dual} we prove the existence and uniqueness of the solution to the dual problem. In \cref{sec:integrability} we show that, for a suitable choice of $\tau$, the corresponding solution to the primal problem is unique and it is a spectral density. If the aforementioned condition is not satisfied, then we {show in \cref{sec:measure} that} the primal solution is a spectral measure that may contain a singular part. Finally, in \cref{sec:conclusions} we draw the conclusions.

\section{Background}\label{sec:background}

Consider the spectral estimation problem for a zero-mean second-order stationary random field $\yb(\tb)$ whose index $\tb=(t_1,t_2,\dots,t_d)$ lives in $\Zbb^d$. Here the dimension $d$ is a positive integer. 
At any fixed $\tb$, $\yb(\tb)$ is a zero-mean complex random vector of dimension $m$. Stationarity means that the covariance $\Sigma_{\kb}:=\E\,\yb(\tb+\kb)\yb(\tb)^*$ does not depend on $\tb$. {Such a definition} implies the symmetry $\Sigma_{-\kb}=\Sigma_{\kb}^*$. 
The spectral density of the random field is defined as the multidimensional Fourier transform of the covariance field
\begin{equation}\label{Phi_spec_density}
\Phi(e^{i\thetab}):=\sum_{\kb\in\Zbb^d} \Sigma_{\kb}e^{-i\innerprod{\kb}{\thetab}},
\end{equation}
where {$\innerprod{\kb}{\thetab}:= \sum_{j=1}^{d} k_j\theta_j$ is the standard inner product and} the (angular) frequency vector, $\thetab=(\theta_1,\theta_2,\dots,\theta_d)\in\Tbb^d:=(-\pi,\pi]^d$, {can be identified with the corresponding point $e^{i\thetab}:=(e^{i\theta_1},\dots,e^{i\theta_d})$ on the $d$-torus.}\footnote{{The symbol $\Tbb$ is used in a {somewhat} nonstandard way to denote the interval $(-\pi,\pi]$ for the convenience of writing multidimensional integrals. Also, here we treat the spectral density as a function of $\thetab$ but keep the conventional notation $\Phi(e^{i\thetab})$.}} 
If the above Fourier transform exists, then by the Wiener--Khinchin theorem, the spectral density $\Phi(e^{i\thetab})$ is positive semidefinite almost everywhere on $\Tbb^d$.

In practice, we usually observe one finite-size realization of the underlying random field, from which we can estimate a {finite number of covariances. Let $\Lambda\subset\Zbb^d$ be the index set for these covariances. We shall require it to have a finite cardinality, contain the all-zero index, and be symmetric {with respect to} the origin, namely $\kb\in\Lambda$ implies $ -\kb\in\Lambda$. Hence $\Lambda$ must have an odd number of elements.} A well-established paradigm to estimate the spectrum of the random field is called \emph{covariance extension} which can be formalized as a \emph{trigonometric moment problem:} {given} the set of covariances $\{\Sigma_{\kb}:\,\kb\in\Lambda\}$ computed from the realization, find a spectral density $\Phi:\Tbb^d \to \Hfrak_{+,m}$ that solves the integral equations
\begin{equation}\label{moment_eqns}
\int_{\Tbb^d}e^{i\innerprod{\kb}{\thetab}}\Phi(e^{i\thetab})\d\m(\thetab) = \Sigma_{\kb}\text{ for all } \kb\in\Lambda,
\end{equation}
where {$\Hfrak_{+,m}$ is the cone of positive definite matrices of dimension $m$, and}
\begin{equation}
\d\m(\thetab)=\frac{1}{(2\pi)^d}\prod_{j=1}^{d} \d\theta_j
\end{equation}
is the normalized Lebesgue measure on $\Tbb^d$. In other words, we constrain the candidate solution to have {its} $\kb$-th Fourier coefficient equal to the given $\Sigma_{\kb}$.

It is well-known that the integral equations \eqref{moment_eqns} in general have infinitely many solutions if one {exists}. One way to {handle} such ill-posedness {problems} is to {use a regularization term}
\begin{equation}\label{primal}
\underset{\Phi\in\Sfrak_m}{\text{minimize}}\ D(\Phi,\Psi) \quad \text{subject to } \cref{moment_eqns},
\end{equation}
where we need to introduce two extra ingredients. One is a spectral density function $\Psi$ which represents our \emph{a priori} knowledge on the solution $\Phi$. The other ingredient is a cost functional $D$, very often a divergence index that measures the ``distance'' between two spectral densities. The set $\Sfrak_m$ contains bounded and coercive\footnote{A matricial spectral density $\Phi$ is bounded and coercive if there exist real numbers {$b>a>0$ such that $a I_m \leq \Phi(e^{i\thetab}) \leq b I_m$} for all $\theta\in\Tbb^d$.} $m\times m$ matricial spectral densities on $\Tbb^d$. Hence the idea is to seek a solution to the moment equations \eqref{moment_eqns} that is {the} closest to $\Psi$. {Such a} formulation is flexible since it allows solution selection by changing the prior $\Psi$. {In this multidimensional setting, however, the solution to \cref{primal} is not necessarily a spectral density, but rather a spectral measure that may contain a singular part.} In other words, the problem \cref{primal} posed for spectral densities may not be well defined. In order to tackle this, we will instead consider a similar formulation involving matricial measures, as will be detailed in the next section.

\section{Problem formulation}
\label{sec:problem}

In this paper, we work on the optimization problem \eqref{primal} having the {cost functional}
\begin{equation}\label{tau_diverg}
D_\tau(\Phi,\Psi) := \int_{\Tbb^d} \trace \left\{ \frac{1}{\tau(\tau-1)} (W_\Psi^{-1} \Phi W_\Psi^{-*})^\tau -\frac{1}{\tau-1} \Psi^{-1}\Phi \right\} \d\m + \frac{m}{\tau}
\end{equation}
parametrized by a real variable {$\tau\in(0,1)$}{, termed \emph{$\tau$-divergence}}.
Here the function $W_\Psi$ is a \emph{pointwise} square root of the prior $\Psi$, that is, $\Psi(e^{i\thetab}) = W_\Psi(e^{i\thetab}) W^*_\Psi(e^{i\thetab})$ for $\thetab\in\Tbb^d$ {almost everywhere}\footnote{In unidimensional case ($d=1$), one can take $W_\Psi$ to be a \emph{spectral factor} of $\Psi$. However, this is in general not possible in the multidimensional setting.}. In particular, one can take $W_\Psi$ to be the pointwise Cholesky factor or the Hermitian square root. {Note that the value of \cref{tau_diverg} does not depend on the particular choice of the square root.} This family of divergence indices was introduced in \cite{Z-15}, where it was shown that $D_\tau$ is strictly convex in its first argument, it is a pseudo-distance, and the domain of $\tau$ can be extended to {$\tau=0$ and $\tau=1$} via continuity.
An important consequence is that $D_\tau$ connects the \emph{Itakura-Saito} distance and the modified\footnote{More precisely, the matricial Kullback-Leibler divergence between $W_\Psi^{-1} \Phi W_\Psi^{-*}$ and the constant identity matrix, cf.~\cite{Z-15}.} \emph{Kullback-Leibler} divergence in a continuous manner as $\tau$ ranges in the closed interval $[0,1]$.

We are interested in solving the optimization problem \eqref{primal} with $D_\tau$ such that $\tau=1-\frac{1}{\nu}$ for {$\nu\ge 2$ being} a positive integer. {This choice of the constant $\tau$ results in rational solutions in the scalar one-dimensional setting \cite{Z-14rat}.}
More explicitly, {the change of variable gives the objective functional}
\begin{equation}\label{D_nu}
D_{1-\frac{1}{\nu}}(\Phi,\Psi) = \int_{\Tbb^d} \trace \left\{ \frac{\nu^2}{1-\nu} (W_\Psi^{-1} \Phi W_\Psi^{-*})^{1-\frac{1}{\nu}} + \nu \Psi^{-1}\Phi \right\} \d\m + \frac{m\nu}{\nu-1}.
\end{equation}
Next, we shall follow the idea of \cite{KLR-16multidimensional,RKL-16multidimensional} and consider {the setting where the function $\Phi$ in the integral is replaced by a {nonnegative matricial measure, i.e., a function defined on the Borel $\sigma$-algebra on $\Tbb^d$ which assigns a positive semidefinite matrix to each Borel set.}
The objective functional then becomes
\begin{equation}\label{primal_obj}
\Dcal_{1-\frac{1}{\nu}}(\d M,\Psi) := \int_{\Tbb^d} \trace \left\{ \frac{\nu^2}{1-\nu} (W_\Psi^{-1} \Phi W_\Psi^{-*})^{1-\frac{1}{\nu}}\d\m + \nu \Psi^{-1}\d M \right\} + \frac{m\nu}{\nu-1}
\end{equation}
where $\Phi$ is the absolutely continuous part of the measure $\d M$, according to a matricial version of the Lebesgue decomposition theorem (see \cref{thm_Cramer_variant} in the Appendix)
\begin{equation}\label{decomp_Lebesgue}
\d M = \Phi\,\d\m + \d M_\s.
\end{equation}
Note that the singular part $\d M_\s$ does not appear in the first term in the integral in \eqref{primal_obj} since the exponent is less than $1$.} Clearly, $\Dcal_{1-\frac{1}{\nu}}(\d M,\Psi)$ can be viewed as a pseudo-distance between nonnegative matricial measures, i.e., its value is nonnegative and is equal to zero when $\d M=\Psi\d\m$. 
The optimization problem then becomes
\begin{equation}\label{primal_measure}
\begin{aligned}
& \underset{\d M \geq 0}{\text{minimize}}
& & \Dcal_{1-\frac{1}{\nu}}(\d M,\Psi) \\
& \text{subject to}
& & \int_{\Tbb^d}e^{i\innerprod{\kb}{\thetab}}\d M(\thetab) = \Sigma_{\kb}\text{ for all } \kb\in\Lambda,
\end{aligned}
\end{equation}
where $\d M$ is a nonnegative matricial measure with an absolutely continuous part $\Phi\d\m$. 

Since \eqref{primal_measure} is a constrained optimization problem, we shall assume that it is feasible.

\begin{assumption}[Feasibility]\label{assump_feasibility}
	There {exists} a nonnegative matricial measure $M_0$
	such that the equality constraints in \eqref{primal_measure} hold given those $\{\Sigma_{\kb}\}_{\kb\in\Lambda}$. Moreover, there {exists} a nonnegative scalar measure $\lambda$ such that $M_0$ has a Radon-Nikod\'{y}m derivative $\d M_0 = M'_{0,\lambda}\d\lambda$
	and the density $M'_{0,\lambda}$ is positive definite on some {open ball} $B\subset\Tbb^d$ {such that $\lambda(B)>0$}.\footnote{In order for the density to exist, each element of $M_0$ must be absolutely continuous {with respect to} $\lambda$. For example, one can take $\lambda(B)= \sum_{j,k} |(M_0)_{jk}|(B)$, sum of the total variation of each element of $M_0$. The density is positive semidefinite $\lambda$-a.e.~because $M_0$ is nonnegative.}
\end{assumption}

\begin{remark}
	{The assumption above is quite strong.}	{In \cite{geronimo2004positive,geronimo2005operator,kimsey2013truncated,geronimo2021autoregressive}, explicit conditions on the truncated covariances $\{\Sigma_{\kb}\}_{\kb\in\Lambda}$ have been formulated for the existence of a nonnegative representing measure which is even a spectral density of the autoregressive type in the $2$-d case. In general, however, the second part of the assumption (positivity of the density) is quite difficult to verify when the dimension is greater than two. 
	In practice, the feasibility can be guaranteed by a suitable covariance estimation scheme given a finite number of measurements. In particular, the estimated covariances correspond to the {smoothed} matricial periodogram, a spectral density (cf.~\cite[Section~V]{ZFKZ2019M2}). {It is also worth noting that}} if a coercive spectral density $\Phi_0$ solves the moment equations \eqref{moment_eqns}, {then} the measure $\Phi_0\d\m$ certainly meets the requirements in \cref{assump_feasibility}.
\end{remark}

The next assumption {puts} some constraints on the prior function $\Psi$, which we will need for the rest part of the paper.
\begin{assumption}\label{assump_Psi}
	Each element of the $m\times m$ matricial function $\Psi$ is a \emph{rational} function, i.e., ratio of two polynomials.
	Further, $\Psi$ is a bounded and coercive spectral density.
\end{assumption}
The rationality assumption is quite natural from the system-theoretic point of view. The second requirement rules out pathological cases since pole-zero cancellation may not be well defined for two polynomials of several variables.

\section{Duality analysis}\label{sec:duality}

In what follows, we will approach the optimization problem \eqref{primal_measure} via duality. Notice that the last term in the objective functional \eqref{primal_obj} is a constant, and {can be} ignored in the {analysis}. {Consider} the Lagrangian
\begin{equation}
\begin{split}
\Lcal_\nu(\d M,\Qb) & =\int_{\Tbb^d} \trace \left\{ \frac{\nu^2}{1-\nu} (W_\Psi^{-1} \Phi W_\Psi^{-*})^{1-\frac{1}{\nu}}\d\m + \nu \Psi^{-1}\d M \right\} \\
 & \quad + \sum_{\kb\in\Lambda} \trace \left[ Q_\kb \left(\int_{\Tbb^d} e^{i\innerprod{\kb}{\thetab}} \d M - \Sigma_{\kb}\right)^* \right] \\
 & = \int_{\Tbb^d} \trace \left\{ \frac{\nu^2}{1-\nu} (W_\Psi^{-1} \Phi W_\Psi^{-*})^{1-\frac{1}{\nu}}\d\m + (\nu \Psi^{-1} + Q) \d M \right\} - \innerprod{\Qb}{\Sigmab}
\end{split}
\end{equation}
where, the {Lagrange multipliers} $\Qb=\{Q_\kb\}_{\kb\in\Lambda}$, $Q_\kb\in\Cbb^{m \times m}$ {satisfies} $Q_{-\kb}=Q_\kb^*$, {and thus} $Q(e^{i\thetab}):=\sum_{\kb\in\Lambda} Q_\kb e^{-i\innerprod{\kb}{\thetab}}$ is a {Hermitian} matrix trigonometric polynomial of several variables. {Further, let} $\Sigmab=\{\Sigma_\kb\}_{\kb\in\Lambda}$ consist of the covariance data, and {denote} $\innerprod{\Qb}{\Sigmab}:=\sum_{\kb\in\Lambda}\trace(Q_\kb \Sigma_{\kb}^*)$.

For a fixed $\Qb$, consider the problem
\begin{equation}
\underset{\d M \geq 0}{\text{inf}}\ \Lcal_\nu(\d M,\Qb).
\end{equation}
The above infimum is finite only for those $\Qb$ in the set
\begin{equation}
\Lscr_+:=\left\{ \{Q_\kb\}_{\kb \in\Lambda} : \nu \Psi^{-1} + Q\geq 0 {\text{ on $\Tbb^d$ but not identically zero}}\right\}.
\end{equation} 
We shall call $\Lscr_+$ feasible set. To see this fact, suppose first that the Hermitian matrix $(\nu \Psi^{-1} + Q)(e^{i\thetab_0})$ has a negative eigenvalue. We can write down its eigen-decomposition
\begin{equation}
(\nu \Psi^{-1} + Q)(e^{i\thetab_0}) = U(\thetab_0) {\it\Lambda}(\thetab_0) U(\thetab_0)^*,
\end{equation}
where ${\it\Lambda}(\thetab_0)=\diag\{\lambda_1(\thetab_0),\dots,\lambda_k(\thetab_0),\dots\lambda_m(\thetab_0)\}$ such that $\lambda_k(\thetab_0)<0$, and $U(\thetab_0)$ is a unitary matrix. We can take
\begin{equation}
\d M = U(\thetab_0)\, {\diag\{0,\dots,w,\dots,0\}}\, U(\thetab_0)^* \delta(\thetab-\thetab_0)\d\thetab,
\end{equation}
where {the real number $w>0$ appears on the $k$-th position of the diagonal matrix}, and $\delta(\cdot)$ is the Dirac delta. Now, the infimum of the Lagrangian is clearly determined by the part
\begin{equation}
\begin{split}
\trace \int_{\Tbb^d} (\nu \Psi^{-1} + Q) \d M = w \lambda_k(\thetab_0) 
\end{split}
\end{equation}
which tends to $-\infty$ as $w\to +\infty$.
On the other hand, if $\Qb$ is such that $\nu \Psi^{-1} + Q$ is identically zero, then we can choose $\d M = w\Psi\d\m$ with $w$ again a positive constant. This time, the dominant term in the Lagrangian is
\begin{equation}
\trace \int_{\Tbb^d} \frac{\nu^2}{1-\nu} (W_\Psi^{-1} \Phi W_\Psi^{-*})^{1-\frac{1}{\nu}}\d\m = \frac{\nu^2}{1-\nu} m w^{1-\frac{1}{\nu}}
\end{equation}
which also tends to $-\infty$ as $w\to+\infty$ because we have $\nu\geq2$. Therefore, we can restrict our attention to the feasible set $\Lscr_+$.

Using the Lebesgue decomposition \eqref{decomp_Lebesgue}, we see that the singular measure $\d M_\s$ appears in the Lagrangian only through the term
\begin{equation}\label{int_Ms}
\trace \int_{\Tbb^d} (\nu \Psi^{-1} + Q) \d M_\s = \trace \int_{\Tbb^d} (\nu \Psi^{-1} + Q) M'_{\s,\lambda} \d \lambda,
\end{equation}
where $\lambda$ here is a nonnegative (scalar) measure such that each element of $M_\s$ is absolutely continuous with respect to it\footnote{Such a measure $\lambda$ always exists. For example, it can be obtained by adding together the total variation of each element of $M_\s$.}, and $M'_{\s,\lambda}$ is a positive semidefinite matrix-valued (measurable) function, whose elements are the element-wise Radon-Nikod\'{y}m derivatives of $M_\s$ {with respect to} $\lambda$. Due to positive semidefiniteness, the integrand $\trace [(\nu \Psi^{-1} + Q) M'_{\s,\lambda}]\geq0$, and hence the integral in \eqref{int_Ms} is nonnegative. It follows that the Lagrangian
\begin{equation}\label{inequal_Lagrangian}
\Lcal_\nu(\d M,\Qb) \geq 
\int_{\Tbb^d} \trace \left\{ \frac{\nu^2}{1-\nu} (W_\Psi^{-1} \Phi W_\Psi^{-*})^{1-\frac{1}{\nu}} + (\nu \Psi^{-1} + Q) \Phi \right\}\d\m - \innerprod{\Qb}{\Sigmab},
\end{equation}
which means that the infimum of $\Lcal_\nu(\d M,\Qb)$ can only be attained {at a measure} $\d M$ {for which} its singular part satisfies $\trace \int_{\Tbb^d} (\nu \Psi^{-1} + Q) \d M_\s = 0$. From \eqref{int_Ms}, we know that the consequence is $\trace [(\nu \Psi^{-1} + Q) M'_{\s,\lambda}]=0$ $\lambda$-{almost everywhere.} Recall {that} for two positive semidefinite matrices $A$ and $B$, $\trace(AB)=0$ {implies} $AB=0$. Hence the previous condition further implies that outside a $\lambda$-null set, whenever $\nu \Psi^{-1} + Q >0$ at some $e^{i\thetab}$, it must happen that $M'_{\s,\lambda}(e^{i\thetab})=0$. In other words, the support of $M'_{\s,\lambda}$ is contained in the zero set of the (trigonometric) rational function 
\begin{equation}\label{zero_set}
\Zcal(\Qb):=\left\{ \thetab\in\Tbb^d \,:\, \det\left[ \nu \Psi^{-1}(e^{i\thetab}) + Q(e^{i\thetab}) \right]=0  \right\},
\end{equation}
{where we have made explicit the dependence on $\Qb$.}

According to \cite{Z-15}, the functional on the right-hand side of \eqref{inequal_Lagrangian}, denoted as $\Lcal_\nu(\Phi\d\m,\Qb)$, is strictly convex in $\Phi$ in the set of bounded and coercive matricial spectral densities. In fact, the convexity can be extended to functions that are positive semidefinite {almost everywhere} although strict convexity is then lost.

\begin{lemma}\label{lem_convex_L_nu}
	{Let $L^1_{m\times m}$ denote the space of {$m\times m$ complex matrix-valued functions $F$ on $\Tbb^d$} such that the absolute value of each entry of $F$ is integrable}. For a fixed $\Qb\in\Lscr_+$, the functional $\Lcal_\nu(\Phi\d\m,\Qb)$ is convex in $\Phi$ over the set of Hermitian matrix-valued functions in $L^1_{m\times m}$ that are positive semidefinite {almost everywhere.}
\end{lemma}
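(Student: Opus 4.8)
The plan is to split $\Lcal_\nu(\Phi\d\m,\Qb)$ into an affine piece and a ``power'' piece and handle them separately. Writing out the integrand,
\begin{equation}
\Lcal_\nu(\Phi\d\m,\Qb) = \underbrace{\int_{\Tbb^d} \frac{\nu^2}{1-\nu}\,\trace\big[(W_\Psi^{-1}\Phi W_\Psi^{-*})^{1-\frac1\nu}\big]\d\m}_{=:\,J(\Phi)} + \underbrace{\int_{\Tbb^d} \trace\big[(\nu\Psi^{-1}+Q)\Phi\big]\d\m - \innerprod{\Qb}{\Sigmab}}_{\text{affine in }\Phi}.
\end{equation}
The second summand is affine in $\Phi$ and, by \cref{assump_Psi}, the coefficient matrix $\nu\Psi^{-1}+Q$ is bounded (since $\Psi$ is coercive and $Q$ is a matrix trigonometric polynomial), so it is a finite convex functional on $L^1_{m\times m}$. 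It therefore suffices to establish convexity of $J$ on the convex cone of Hermitian, a.e.-positive-semidefinite functions in $L^1_{m\times m}$.

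For $J$ the key step is pointwise convexity of the integrand. Fix $\thetab$ at which $\Psi(e^{i\thetab})$ is positive definite — which holds for a.e.\ $\thetab$ by coercivity — so that $W_\Psi^{-1}(e^{i\thetab})$ is a well-defined bounded matrix and the congruence $X\mapsto W_\Psi^{-1}(e^{i\thetab})\,X\,W_\Psi^{-*}(e^{i\thetab})$ is a linear map taking the positive semidefinite cone into itself. Since $1-\tfrac1\nu\in(0,1)$, the scalar function $t\mapsto t^{1-1/\nu}$ is operator concave on $[0,\infty)$ by the L\"owner--Heinz theorem; because the trace is linear and monotone with respect to the L\"owner order, $A\mapsto\trace(A^{1-1/\nu})$ is concave on the positive semidefinite cone. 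Precomposing with the above linear congruence preserves concavity, and multiplying by the constant $\tfrac{\nu^2}{1-\nu}$, which is \emph{negative} because $\nu\ge 2$, turns the result into a convex function of the matrix $\Phi(e^{i\thetab})$. Hence the integrand of $J$ is, for a.e.\ $\thetab$, convex in $\Phi(e^{i\thetab})$, and integration against the nonnegative measure $\d\m$ preserves convexity, giving convexity of $J$ and therefore of $\Lcal_\nu(\,\cdot\,\d\m,\Qb)$.

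Before concluding I would check that $\Lcal_\nu(\Phi\d\m,\Qb)$ is genuinely finite on the stated domain, so that the convexity inequality is not vacuous. For $A\succeq0$ with eigenvalues $\lambda_1,\dots,\lambda_m$, concavity of $x\mapsto x^{1-1/\nu}$ gives $\trace(A^{1-1/\nu})=\sum_j\lambda_j^{1-1/\nu}\le m^{1/\nu}(\trace A)^{1-1/\nu}$; applying this with $A=W_\Psi^{-1}\Phi W_\Psi^{-*}$, using $\trace(W_\Psi^{-1}\Phi W_\Psi^{-*})=\trace(\Psi^{-1}\Phi)\le\|\Psi^{-1}\|_\infty\trace\Phi$, and invoking Jensen's inequality for the probability measure $\d\m$, one obtains $0\ge J(\Phi)\ge \tfrac{\nu^2}{1-\nu}\,m^{1/\nu}\|\Psi^{-1}\|_\infty^{1-1/\nu}\big(\int_{\Tbb^d}\trace\Phi\,\d\m\big)^{1-1/\nu}>-\infty$ whenever $\Phi\in L^1_{m\times m}$, while the affine part is finite as already noted. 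I expect the only genuinely delicate point to be this passage to the boundary of the cone: once coercivity of $\Phi$ is relinquished one must use operator concavity of $t\mapsto t^{1-1/\nu}$ on the \emph{closed} half-line $[0,\infty)$ (valid by continuity of the power function), and the strict convexity available in \cite{Z-15} is lost precisely because the argument $\Phi(e^{i\thetab})$ may now be singular; every remaining step is a routine consequence of operator-convexity theory and elementary measure theory.
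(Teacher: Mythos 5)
Your proof is correct, and it follows the paper's overall decomposition (affine/linear part plus the trace--power integral, with convexity obtained pointwise in $\thetab$ and then integrated). Where you differ is in the key pointwise step: the paper invokes the derivative-based analysis of \cite{Z-14} to get \emph{strict} concavity of $B\mapsto\trace\{(A^{-1}BA^{-*})^{1-\frac1\nu}\}$ on the open cone $B>0$ and then extends concavity to singular $B$ by continuity, whereas you appeal to operator concavity of $t\mapsto t^{1-1/\nu}$ on $[0,\infty)$ together with linearity and L\"owner-monotonicity of the trace, which yields concavity directly on the closed positive semidefinite cone and removes the boundary-extension step entirely. (In fact even operator concavity is more than needed: concavity of the scalar power alone already gives concavity of $A\mapsto\trace(A^{1-1/\nu})$ by the standard spectral/Peierls argument, and the usual statement of L\"owner--Heinz concerns operator \emph{monotonicity} of $t^p$, $0\le p\le1$ -- operator concavity of these powers is a companion fact from L\"owner's theory, so your attribution is only slightly loose.) Your approach buys a more self-contained and arguably cleaner argument on the closed cone; the paper's approach, being Hessian-based, simultaneously records the strict concavity in the interior that is reused later for the uniqueness/strict-convexity discussion. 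Your additional finiteness estimate via $\trace(A^{1-1/\nu})\le m^{1/\nu}(\trace A)^{1-1/\nu}$, the identity $\trace(W_\Psi^{-1}\Phi W_\Psi^{-*})=\trace(\Psi^{-1}\Phi)$, and Jensen's inequality is sound and is a nice supplement the paper does not include, though it is not strictly required for the convexity statement itself.
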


\begin{proof}
	Since the last term of $\Lcal_\nu(\Phi\d\m,\Qb)$ is fixed and the second term is linear in $\Phi$, we only need to show the convexity of the first integral term. Since we are in the case $\nu\geq2$, it is equivalent to prove that the functional
	\begin{equation}
	g(\Phi) := \trace \int_{\Tbb^d} (W_\Psi^{-1} \Phi W_\Psi^{-*})^{1-\frac{1}{\nu}} \d\m
	\end{equation}
	is concave. The question then reduces to the concavity of the integrand. More precisely, fix a nonsingular matrix $A$, and define the function
	\begin{equation}
	f_A(B) := \trace\left\{ (A^{-1} B A^{-*})^{1-\frac{1}{\nu}} \right\}
	\end{equation}
	for $B\geq 0$. One can show that $f_A$ is strictly concave for $B>0$ via derivative-based analysis as given in \cite{Z-14}. Then the concavity can be extended to the boundary via continuity, namely to positive semidefinite matrices $B$. Finally, notice that
	\begin{equation}
	g(\Phi) = \trace \int_{\Tbb^d} f_{W_\Psi(\thetab)} (\Phi(\thetab)) \d\m,
	\end{equation}
	and concavity of $g$ follows using a pointwise argument on the integrand.
\end{proof}

The directional derivative of the Lagrangian in the $L^1_{m\times m}$ direction $\delta\Phi: \Tbb^d \to \Hfrak_{m}$, where $\Hfrak_{m}$ is the space of $m\times m$ Hermitian matrices, can be computed as
\begin{subequations}\label{derivative_Lagrangian}
\begin{align}
\delta \Lcal_\nu(\Phi\d\m,\Qb;\delta\Phi) & = \int_{\Tbb^d} \trace \left\{ -\nu (W_\Psi^{-1} \Phi W_\Psi^{-*})^{-\frac{1}{\nu}} W_\Psi^{-1} \delta\Phi W_\Psi^{-*} + (\nu \Psi^{-1} + Q) \delta\Phi \right\} \d\m \\
 & = \int_{\Tbb^d}\innerprod{-\nu W_\Psi^{-*} (W_\Psi^{-1} \Phi W_\Psi^{-*})^{-\frac{1}{\nu}} W_\Psi^{-1} + \nu \Psi^{-1} + Q}{\delta\Phi} \d\m \label{inner_prod_dPhi}
\end{align}
\end{subequations}
where we have used the fact that the directional derivative of $\trace(X^c)$ for $X>0$ and $c\in\Rbb$ is given by (cf.~\cite{Z-14})
\begin{equation}\label{diff_X^c}
\delta (\trace(X^c);\delta X) = c \trace(X^{c-1} \delta X).
\end{equation}
{I}n computing \eqref{derivative_Lagrangian}, we have interchanged the order of differentiation and integration. Such an operation can be justified using Lebesgue's dominated convergence theorem \emph{if $\Phi$ is coercive}.  In that case, we can impose the directional derivative $\delta \Lcal_\nu(\Phi\d\m,\Qb;\delta\Phi)$ to vanish in any feasible direction $\delta\Phi\in L^1_{m\times m}$ such that $\Phi+\varepsilon\,\delta\Phi$ is nonnegative {almost everywhere} for sufficiently small $\varepsilon>0$. In particular, taking $\delta\Phi$ equal to the first member of the inner product in \eqref{inner_prod_dPhi} yields
\begin{equation}
\nu W_\Psi^{-*} (W_\Psi^{-1} \Phi W_\Psi^{-*})^{-\frac{1}{\nu}} W_\Psi^{-1} = \nu \Psi^{-1} + Q\quad {\text{a.e.}}
\end{equation}
After some calculation, we recover the stationary point
\begin{equation}\label{Phi_nu}
\begin{split}
\Phi_\nu & := \Psi \left[ (\Psi^{-1}+\frac{1}{\nu}Q) \Psi \right]^{-\nu} \\
& = (\Psi^{-1}+\frac{1}{\nu}Q)^{-1} \, \Psi^{-1} \cdots \Psi^{-1} \, (\Psi^{-1}+\frac{1}{\nu}Q)^{-1}
\end{split}
\end{equation}
where the last expression contains $\nu$ copies of $(\Psi^{-1}+\frac{1}{\nu}Q)^{-1}$. Apparently, such $\Phi_\nu$ is coercive, and a posteriori, the functional $\Lcal_\nu(\Phi\d\m,\Qb)$ is indeed differentiable at $\Phi_\nu$. Moreover, {as} a stationary point, {$\Phi_\nu$ must be} a minimizer of $\Lcal_\nu(\Phi\d\m,\Qb)$ {by the convexity property} (\cref{lem_convex_L_nu}).

Insert $\Phi_\nu$ into the Lagrangian, and we get the dual problem {of maximizing}
\begin{equation}
\Lcal_\nu(\Phi_\nu\d\m,\Qb) = \int_{\Tbb^d} \trace \left\{ \frac{\nu}{1-\nu} \left[ \Psi^{-1} (\Psi^{-1} + \frac{1}{\nu} Q)^{-1} \right]^{\nu-1} \right\} \d\m - \innerprod{\Qb}{\Sigmab}
\end{equation}
{with respect to} $\Qb\in\Lscr_+$.
Clearly, it is equivalent to consider the problem
\begin{equation}\label{J_dual}
\underset{\Qb\in\Lscr_+}{\text{minimize}} \quad J_\nu(\Qb) := - \Lcal_\nu(\Phi_\nu\d\m,\Qb)
\end{equation}
and we will call
\begin{equation}\label{J_dual_func}
J_\nu(\Qb) = \langle\Qb,\Sigmab\rangle + \frac{\nu}{\nu-1} \int_{\Tbb^d} \trace\left\{ \left[ \Psi^{-1} (\Psi^{-1} + \frac{1}{\nu} Q)^{-1} \right]^{\nu-1} \right\} \d\m
\end{equation}
the dual function.
An immediate comment is that the dual function does not depend on the factor of $\Psi$ that appears in the primal problem.

\section{Solution to the dual problem: existence and uniqueness}\label{sec:solution_dual}

Since $\Psi$ is rational by \cref{assump_Psi}, we can extend the dual function to the boundary of the feasible set, denoted as $\partial\Lscr_+$. More precisely, $\Qb\in\partial\Lscr_+$ if the matrix function $\Psi^{-1}+\frac{1}{\nu} Q$ is positive semidefinite on $\Tbb^d$ and is singular at some $e^{i\thetab}$. The function $\Psi(\zb)^{-1}+\frac{1}{\nu} Q(\zb)$ is rational in $\zb\in\Cbb^d$, and so is its determinant. Therefore, {the {corresponding} $\Zcal(\Qb)$ in \eqref{zero_set} is just the zero set of a multivariate trigonometric polynomial which is well known to have Lebesgue measure zero in $\Tbb^d$}. {We conclude that the} function $J_\nu$ at $\Qb\in\partial\Lscr_+$ admits the expression in \eqref{J_dual_func}, while the domain of integration is now changed to exclude those zero points.

Let us compute the first variation of the dual function 
\begin{equation}\label{delta_J_nu}
\begin{split}
\delta J_\nu(\Qb;\delta\Qb) & = \langle\delta\Qb,\Sigmab\rangle - \int_{\Tbb^d} \trace\left\{ \Psi\left[ (\Psi^{-1} + \frac{1}{\nu} Q) \Psi \right]^{-\nu} \delta Q \right\} \d\m \\
 & = \trace \left\{ \sum_{\kb\in\Lambda} \delta Q_\kb \left( \Sigma_{\kb}^* - \int_{\Tbb^d} e^{-i\innerprod{\kb}{\thetab}} \Psi\left[ (\Psi^{-1} + \frac{1}{\nu} Q) \Psi \right]^{-\nu} \d\m \right) \right\}
\end{split}
\end{equation}
{where $\Qb\in\interior(\Lscr_+)$, i.e., the interior of $\Lscr_+$ such that $\Psi^{-1}+\frac{1}{\nu}Q>0$ on {$\Tbb^d$}. As a consequence,} we can take the differential inside the integral. In that case, the stationarity condition is
\begin{equation}\label{stationary_cond}
\int_{\Tbb^d} e^{i\innerprod{\kb}{\thetab}} \Psi \left[ (\Psi^{-1}+\frac{1}{\nu}Q) \Psi \right]^{-\nu} \d\m = \Sigma_\kb\quad\forall \kb\in\Lambda.
\end{equation}
That is to say, if the optimal $\Qb^\circ$ of the dual {problem} lies in $\interior(\Lscr_+)$, then the corresponding $\Phi_\nu(\Qb^\circ)$ in \eqref{Phi_nu} solves the moment equations \eqref{moment_eqns}. It then follows from the primal-dual complementarity that $\Phi_\nu(\Qb^\circ)\d\m$ is also a solution to the primal problem \eqref{primal_measure}. The remaining part of this section will be devoted to the proof of the next result.

\begin{theorem}\label{thm_dual}
	If $\cref{assump_feasibility}$ holds, then the dual problem \eqref{J_dual} admits a unique solution in the closed set $\Lscr_+$. 
\end{theorem}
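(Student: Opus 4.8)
The proof splits naturally into two parts: uniqueness, which follows from convexity, and existence, which is the substantive part and requires a coercivity argument on the closed feasible set $\Lscr_+$. For uniqueness, I would first observe that $J_\nu$ is convex on the convex set $\Lscr_+$: the linear term $\langle\Qb,\Sigmab\rangle$ is affine, and the integral term $\frac{\nu}{\nu-1}\int \trace\{[\Psi^{-1}(\Psi^{-1}+\frac1\nu Q)^{-1}]^{\nu-1}\}\d\m$ is convex in $\Qb$ because, pointwise, $Q\mapsto \trace\{[\Psi^{-1}(\Psi^{-1}+\frac1\nu Q)^{-1}]^{\nu-1}\}$ is a convex function of the Hermitian matrix $Q$ on the region where $\Psi^{-1}+\frac1\nu Q\ge 0$ — this is the same Legendre-type convexity already invoked via \cref{lem_convex_L_nu} (indeed $J_\nu$ is, up to constants, the Legendre dual of the strictly convex primal functional, hence convex). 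To upgrade to \emph{strict} convexity (and hence uniqueness of the minimizer), I would argue that if $J_\nu$ were affine along a segment $\Qb_0+t\,\delta\Qb$, then $Q(e^{i\thetab})$ would have to be "flat" in a way forcing $\delta Q(e^{i\thetab})\equiv 0$ on a set of positive measure; since $\delta Q$ is a trigonometric polynomial with spectrum in the finite set $\Lambda$, vanishing on a positive-measure set forces $\delta\Qb=0$. This is the standard argument in the one-dimensional and scalar multidimensional references (\cite{KLR-16multidimensional,RKL-16multidimensional}).

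For existence, the plan is the classical one: show $J_\nu$ is lower semicontinuous on $\Lscr_+$ and that its sublevel sets are compact, so a minimizer exists by Weierstrass. Lower semicontinuity on the interior is clear since the integrand is continuous in $\Qb$ there; on the boundary $\partial\Lscr_+$ one uses Fatou's lemma together with the fact, already established in the excerpt, that the zero set $\Zcal(\Qb)$ has Lebesgue measure zero (so the integral expression \eqref{J_dual_} persists, with the integrand being a.e.-finite and nonnegative near the singularities because the exponent $\nu-1\ge 1>0$ makes $[\Psi^{-1}(\Psi^{-1}+\frac1\nu Q)^{-1}]^{\nu-1}$ \emph{bounded}, not blow up). The heart of the matter is coercivity: I must show $J_\nu(\Qb)\to+\infty$ as $\|\Qb\|\to\infty$ within $\Lscr_+$. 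Write $\Qb = r\,\widehat{\Qb}$ with $r=\|\Qb\|\to\infty$ and $\widehat{\Qb}$ on the unit sphere of the (finite-dimensional) parameter space, intersected with $\Lscr_+$. Then $\langle\Qb,\Sigmab\rangle = r\langle\widehat{\Qb},\Sigmab\rangle$ grows linearly in $r$ (with a sign depending on $\widehat{\Qb}$), while the integral term is nonnegative; if $\langle\widehat{\Qb},\Sigmab\rangle>0$ we are done immediately. The delicate case is $\langle\widehat{\Qb},\Sigmab\rangle\le 0$: here \cref{assump_feasibility} must be used to show that the integral term grows at least linearly in $r$ and dominates. Concretely, on the open ball $B$ where the feasible density $M'_{0,\lambda}$ is positive definite, the moment constraints force $\int_B \trace(\widehat Q\,M'_{0,\lambda})\,\d\lambda$ to be comparable to $-\langle\widehat\Qb,\Sigmab\rangle$ minus a bounded correction, and since on $B$ one cannot have $\widehat Q\le 0$ identically (that would contradict positivity of $M'_{0,\lambda}$ against a nonzero $\widehat Q$ unless $\widehat Q$ is concentrated outside $B$), one extracts that $\Psi^{-1}+\frac{r}{\nu}\widehat Q$ has an eigenvalue of order $r$ somewhere on $B$, making $[\Psi^{-1}(\Psi^{-1}+\frac{r}{\nu}\widehat Q)^{-1}]^{\nu-1}$... small there — so I actually need the \emph{opposite} estimate. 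The correct mechanism, following \cite{KLR-16multidimensional}, is: decompose the dual objective as $\langle\Qb,\Sigmab\rangle + (\text{integral})$ and rewrite $\langle\Qb,\Sigmab\rangle$ using a \emph{feasible} measure $M_0$ so that $\langle\Qb,\Sigmab\rangle = \int \trace(Q\,\d M_0)$; then $J_\nu(\Qb) = \int\trace[(\Psi^{-1}+\tfrac1\nu Q)\,\d M_0]\cdot\nu - \nu\int\Psi^{-1}\d M_0 + (\text{integral term})$, and on the region where $\Psi^{-1}+\frac1\nu Q\ge 0$ the first term is $\ge 0$; one then shows it must in fact blow up using that $M_0$ charges the ball $B$ where its density is bounded below. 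This reduces coercivity to a statement purely about how large $\int_B \trace[(\Psi^{-1}+\frac1\nu Q)M'_{0,\lambda}]\d\lambda$ can be forced to be, combined with the elementary inequality that controls the integral term from below in terms of $\int \trace[(\Psi^{-1}+\frac1\nu Q)^{-1}\cdot(\text{something})]$.

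The \textbf{main obstacle} I anticipate is exactly this coercivity estimate on the \emph{boundary-inclusive} closed set $\Lscr_+$: one has to simultaneously (i) handle directions $\widehat\Qb$ for which $\Psi^{-1}+\frac1\nu \widehat Q$ touches zero (so the interior-based duality computation \eqref{stationary_cond} does not directly apply), and (ii) extract quantitative growth from the \emph{qualitative} \cref{assump_feasibility} — the assumption only gives a feasible measure with a locally coercive density on \emph{some} ball, not a globally coercive spectral density, so the argument must localize to $B$ and cannot appeal to a uniform lower bound on $\Phi$. I expect the resolution to mirror \cite[Section~IV]{KLR-16multidimensional}/\cite{RKL-16multidimensional}: bound $J_\nu(\Qb) \ge \nu\int_B \trace[(\Psi^{-1}+\frac1\nu Q)\,M'_{0,\lambda}]\,\d\lambda - C$ for a constant $C$ depending only on $\Psi,M_0$, then observe that as $\|\Qb\|\to\infty$ with $\Psi^{-1}+\frac1\nu Q\ge 0$, either $Q$ stays bounded on $B$ (impossible if $\|\Qb\|\to\infty$, since a trigonometric polynomial bounded on an open ball with bounded sup-norm there has bounded coefficients — by equivalence of norms on the finite-dimensional polynomial space restricted to $B$, which has positive measure) or the lower bound forces $J_\nu\to+\infty$. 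Once coercivity and lower semicontinuity are in hand, existence on the compact sublevel set is immediate, and combined with the strict-convexity argument above, the minimizer is unique — completing the proof. A final remark is that one should check the minimizer can lie on $\partial\Lscr_+$; the theorem as stated allows this, and nothing in the existence argument excludes it (it is precisely the content of the later sections that decides when $\Qb^\circ\in\interior\Lscr_+$).
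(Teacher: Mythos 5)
Your outline follows the same architecture as the paper's proof: uniqueness via strict convexity of $J_\nu$ (the paper's \cref{lem_f_convex} and \cref{prop_strict_convex}), existence via lower semicontinuity (Fatou, as in \cref{lem_lower_semicon}) plus coercivity plus Weierstrass, and the key rewriting $\innerprod{\Qb}{\Sigmab}=\nu\trace\int(\Psi^{-1}+\frac{1}{\nu}Q)\d M_0-\nu\trace\int\Psi^{-1}\d M_0$ using a feasible measure, which is exactly \cref{lem_innerprod_inequal}. However, the decisive step---coercivity---is not actually proved in your proposal. The second horn of your dichotomy asserts that if $\sup_{B}\|Q_k\|\to\infty$ then $\int_B\trace[(\Psi^{-1}+\frac{1}{\nu}Q_k)M'_{0,\lambda}]\,\d\lambda\to\infty$, and this implication is unjustified at the level of generality of \cref{assump_feasibility}: the assumption only gives $\lambda(B)>0$ and positive definiteness of the density on $B$; it does not prevent $\lambda$ restricted to $B$ from being singular or concentrated on a small portion of $B$. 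Since on $\Lscr_+$ the polynomial $Q_k$ is only bounded \emph{below} (by $-\nu\Psi^{-1}$), its sup-norm blow-up on $B$ can take place on a $\lambda$-null part of $B$ while the integral stays bounded; equivalently, the linear functional $\Qb\mapsto\int_B\trace(Q\,M'_{0,\lambda})\d\lambda$ has a nontrivial kernel, so the norm equivalence you invoke on the finite-dimensional polynomial space does not transfer to this functional.

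What the paper does at this point (\cref{lem_unbounded_Q}), and what your argument is missing, is a normalization-and-compactness step: set $\Qb_k^0=\Qb_k/\|\Qb_k\|$, extract a subsequence converging to some $\Qb^0_\infty$ on the unit sphere, use the constraint $\Psi^{-1}+\frac{1}{\nu}Q_k\geq0$ divided by $\|\Qb_k\|$ to conclude $Q^0_\infty\geq0$ on $\Tbb^d$, and then show $\eta:=\liminf_k\innerprod{\Qb_k^0}{\Sigmab}=\innerprod{\Qb^0_\infty}{\Sigmab}>0$: if it vanished, then $\trace(Q^0_\infty M'_{0,\lambda})=0$ $\lambda$-a.e.\ would force $Q^0_\infty M'_{0,\lambda}=0$, and positive definiteness of $M'_{0,\lambda}$ on $B$ would make $Q^0_\infty$ vanish on $B$ and hence identically (\cite[Lemma~1]{ringh2015multidimensional}), contradicting $\|\Qb^0_\infty\|=1$; then $J_\nu(\Qb_k)\geq\|\Qb_k\|\innerprod{\Qb^0_k}{\Sigmab}\geq\frac{\eta}{2}\|\Qb_k\|\to\infty$. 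Your lower bound $J_\nu(\Qb)\geq\nu\int_B\trace[(\Psi^{-1}+\frac{1}{\nu}Q)M'_{0,\lambda}]\d\lambda-C$ is the right starting point, but it needs this limit-direction argument to finish. A secondary slip: your claim that the integrand of the extended dual function is \emph{bounded} near $\Zcal(\Qb)$ for $\Qb\in\partial\Lscr_+$ because the exponent $\nu-1\geq1$ is positive is incorrect---it is $(\Psi^{-1}+\frac{1}{\nu}Q)^{-1}$ that blows up there, and indeed $J_\nu=+\infty$ on the boundary when $\nu\geq\frac{md}{2}+1$ (\cref{prop_int_blowup}); this does not hurt lower semicontinuity, which Fatou gives for extended-real values, but your stated justification is wrong. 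The uniqueness half of your plan is essentially the paper's and is fine.
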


We will break down the proof into two subsections: {the former shows the uniqueness of the solution, if it exists; the latter regards its existence.}

\subsection{Uniqueness}

We just need to prove that the dual function is strictly convex in $\Lscr_+$. The uniqueness of the solution then follows if a solution \emph{exists}. The idea is to compute the second variation of $J_\nu$ in $\interior(\Lscr_+)$, and show its positive definiteness. Moreover, we show that strict convexity holds even when the boundary of $\Lscr_+$ is taken into consideration.

\begin{lemma}\label{lem_f_convex}
	Given a positive definite matrix $X$ and an integer $n\geq1$, the function $f_{X,n}(Y):=\trace\{(XY^{-1})^n\}$ is strictly convex in $Y>0$.
\end{lemma}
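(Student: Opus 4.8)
The plan is to prove strict convexity of $f_{X,n}(Y) = \trace\{(XY^{-1})^n\}$ on the cone of positive definite matrices by reducing to a scalar convexity statement along arbitrary line segments. First I would fix $Y_0 > 0$ and a nonzero Hermitian direction $H$, and study the scalar function $\varphi(t) := f_{X,n}(Y_0 + tH)$ on the open interval of $t$ for which $Y_0 + tH > 0$; strict convexity of $f_{X,n}$ is equivalent to $\varphi''(t) > 0$ for all such lines and all $t$. A convenient first reduction is to use a congruence: write $Y_0 = R R^*$ with $R$ invertible (e.g.\ Cholesky), substitute $Y = R \widetilde Y R^*$, and observe $f_{X,n}(R\widetilde Y R^*) = \trace\{(X R^{-*} \widetilde Y^{-1} R^{-1})^n\} = \trace\{(\widetilde X \widetilde Y^{-1})^n\}$ where $\widetilde X := R^{-1} X R^{-*} > 0$ (using cyclicity of the trace). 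Since congruence maps line segments to line segments and preserves positive definiteness, it suffices to prove $\varphi''(0) > 0$ at the point $\widetilde Y = I$, i.e.\ to show that $t \mapsto \trace\{(\widetilde X (I + tK)^{-n}\}$ has positive second derivative at $t=0$ for every nonzero Hermitian $K$ — and after a further unitary diagonalization of $K$ we may take $K = \diag(k_1,\dots,k_m)$ with the $k_j$ real and not all equal can be arranged only in the relevant direction; in general the $k_j$ are just real.

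Next I would carry out the differentiation. Using $\frac{d}{dt}(I+tK)^{-1} = -(I+tK)^{-1} K (I+tK)^{-1}$ and the derivative formula \eqref{diff_X^c} (or direct expansion of $(I+tK)^{-n} = \sum_{j\ge 0}\binom{-n}{j} t^j K^j$ for small $t$), one gets, with $K$ diagonal as above,
\begin{equation*}
\varphi''(0) = \sum_{p,q} \widetilde X_{pq}\,\widetilde X_{qp}\, c_n(k_p,k_q) = \sum_{p,q} |\widetilde X_{pq}|^2\, c_n(k_p,k_q),
\end{equation*}
where $c_n(a,b)$ is the coefficient obtained by expanding $\trace\{\widetilde X (I+tK)^{-n}\}$ to second order; explicitly the second-order term in $(I+tK)^{-n}$ restricted to the $(p,q)$ entry is a symmetric function of $(k_p,k_q)$ equal to $\frac{1}{2}$ times the second divided difference of $s\mapsto (1+s)^{-n}$ at the nodes $k_p,k_q$. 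Since $s \mapsto (1+s)^{-n}$ is strictly convex on $(-1,\infty)$ (its second derivative $n(n+1)(1+s)^{-n-2}$ is strictly positive), every divided difference $c_n(k_p,k_q) \ge 0$, with strict positivity whenever $k_p = k_q$ (the diagonal terms give $\tfrac12\varphi$-type positive contributions). Because $\widetilde X > 0$ we have $\widetilde X_{pp} > 0$ for all $p$, so the diagonal terms $\sum_p |\widetilde X_{pp}|^2 c_n(k_p,k_p) = \tfrac{n(n+1)}{2}\sum_p \widetilde X_{pp}^2 k_p^2$ is strictly positive unless $K = 0$. Hence $\varphi''(0) > 0$ for every nonzero Hermitian $K$, which is exactly strict convexity of $f_{X,n}$ at the arbitrary point $Y_0$.

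The main obstacle I anticipate is the bookkeeping in identifying the quadratic form $\varphi''(0)$ and verifying that all its coefficients $c_n(k_p,k_q)$ are nonnegative — the cross terms with $k_p \ne k_q$ require recognizing the divided-difference (Löwner) structure rather than naive term-by-term bounds, since individual summands in a crude expansion need not be sign-definite. An alternative route that sidesteps this is to invoke operator convexity: $s \mapsto s^{-n}$ is operator convex on the positive definites for $n \le 1$ but \emph{not} for $n \ge 2$, so one cannot simply quote operator convexity of $Y^{-n}$; however, one can still use that $Y \mapsto Y^{-1}$ is operator convex together with monotonicity/convexity of $t\mapsto t^n$ and the fact that $\trace$ of a product with the fixed positive matrix $X$ behaves well — but making this rigorous for $n\ge 2$ again essentially reduces to the divided-difference computation above, so I would present the direct scalar-reduction argument as the cleanest. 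Strict convexity, once established in the interior, is all that is needed for the uniqueness claim in \cref{thm_dual}; extension of the relevant inequalities to the boundary $\partial\Lscr_+$ is handled separately in the paper via the rationality of $\Psi$ and is not required here.
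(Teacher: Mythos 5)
Your overall plan---restrict to a line, normalize the base point to $I$ by the congruence $Y=R\widetilde Y R^{*}$, and show that the second derivative along every nonzero Hermitian direction is strictly positive---is sound, and it is exactly the derivative-based route the paper intends (the paper omits the computation and points to \cite{Z-14}). The gap is in the computation itself. After the congruence, the function you must differentiate is $\varphi(t)=\trace\{(\widetilde X(I+tK)^{-1})^{n}\}$, which for $n\ge 2$ is \emph{not} the same as $\trace\{\widetilde X(I+tK)^{-n}\}$, since $\widetilde X$ and $K$ do not commute; already at $t=0$ the two functions take the different values $\trace\widetilde X^{\,n}$ and $\trace\widetilde X$. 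Your expansion, the identification of coefficients as divided differences of $s\mapsto(1+s)^{-n}$, and the resulting quadratic form $\sum_{p,q}|\widetilde X_{pq}|^{2}c_n(k_p,k_q)$ all refer to the second function, and they are not correct even for it: with $K$ diagonal, $(I+tK)^{-n}$ is diagonal, so its off-diagonal entries vanish to every order and the second derivative of $\trace\{\widetilde X(I+tK)^{-n}\}$ is just $n(n+1)\sum_p\widetilde X_{pp}k_p^{2}$, with no cross terms. The L\"owner/divided-difference structure you invoke describes a Hermitian perturbation added \emph{inside} a matrix function at a base point with several distinct eigenvalues, which is not the situation here (the base point is $I$). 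Moreover, the true $\varphi''(0)$ is of degree $n$ in $\widetilde X$, so for $n\ge 3$ it cannot be a quadratic form in the entries of $\widetilde X$ at all.

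The fix stays within your framework. Differentiating the correct function and using $(I+tK)^{-1}=I-tK+t^{2}K^{2}-\cdots$ gives
\begin{equation*}
\varphi''(0)\;=\;n\sum_{j=1}^{n-1}\trace\bigl(\widetilde X^{\,j}K\,\widetilde X^{\,n-j}K\bigr)\;+\;2n\,\trace\bigl(\widetilde X^{\,n}K^{2}\bigr).
\end{equation*}
Each summand is nonnegative because for $A,B>0$ and Hermitian $K$ one has $\trace(AKBK)=\|B^{1/2}KA^{1/2}\|_F^{2}\ge 0$, and the last term equals $2n\,\|\widetilde X^{\,n/2}K\|_F^{2}$, which is strictly positive whenever $K\neq 0$ since $\widetilde X^{\,n/2}$ is invertible. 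Hence $\varphi''(0)>0$ for every nonzero Hermitian direction; as $Y_0>0$ was arbitrary, the Hessian is positive definite throughout the cone and strict convexity follows. With this computation the preliminary unitary diagonalization of $K$ is unnecessary, and this is essentially the omitted calculation the paper attributes to \cite[Theorem~5.1]{Z-14}.
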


\begin{proof}
	The function $f_{X,n}$ is clearly smooth, so the aim is to show that the second differential is positive definite. The computation is similar to those in the proof of \cite[Theorem~5.1]{Z-14}, and hence is omitted here.
\end{proof}

\begin{proposition}\label{prop_strict_convex}
	The dual function $J_\nu(\Qb)$ is strictly convex in $\Lscr_+$.
\end{proposition}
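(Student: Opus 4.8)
The plan is to discard the affine term $\langle\Qb,\Sigmab\rangle$ in \eqref{J_dual_}, which is irrelevant to convexity, and reduce the statement to strict convexity of the map
\[
\Qb\ \longmapsto\ \frac{\nu}{\nu-1}\int_{\Tbb^d}\trace\left\{\left[\Psi^{-1}\Big(\Psi^{-1}+\tfrac{1}{\nu}Q\Big)^{-1}\right]^{\nu-1}\right\}\d\m
\]
on $\Lscr_+$. I would write $n:=\nu-1\ge1$ and, at a fixed $\thetab$, set $X:=\Psi^{-1}(e^{i\thetab})$, which is positive definite by \cref{assump_Psi}, and $Y:=\Psi^{-1}(e^{i\thetab})+\tfrac1\nu Q(e^{i\thetab})$, which is an \emph{affine} function of $\Qb$ and is positive definite for $\Qb\in\interior(\Lscr_+)$, and positive definite off the $\m$-null set $\Zcal(\Qb)$ for $\Qb\in\partial\Lscr_+$. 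In this notation the integrand is exactly $f_{X,n}(Y)$ from \cref{lem_f_convex}. Since $\nu\ge2$ the prefactor $\nu/(\nu-1)$ is positive, and since the eigenvalues of $XY^{-1}$ agree with those of $X^{1/2}Y^{-1}X^{1/2}>0$ the integrand is nonnegative, so the integral is a well-defined element of $[0,+\infty]$ and it suffices to argue with extended-real-valued convexity; no separate integrability input is needed here.

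Next I would fix $\Qb_1,\Qb_2\in\Lscr_+$ with $\Qb_1\ne\Qb_2$ and $t\in(0,1)$, and put $Y_j:=\Psi^{-1}+\tfrac1\nu Q_j$. First I would check that the segment stays in $\Lscr_+$: the combination $\nu\Psi^{-1}+(tQ_1+(1-t)Q_2)=t(\nu\Psi^{-1}+Q_1)+(1-t)(\nu\Psi^{-1}+Q_2)$ is a sum of positive semidefinite functions, hence positive semidefinite, and it cannot vanish identically since that would force both $\nu\Psi^{-1}+Q_1$ and $\nu\Psi^{-1}+Q_2$ to vanish identically, contradicting membership in $\Lscr_+$. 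Then \cref{lem_f_convex} gives, pointwise at every $\thetab$ where $Y_1(\thetab),Y_2(\thetab)>0$,
\[
f_{X,n}\big(tY_1(\thetab)+(1-t)Y_2(\thetab)\big)\ \le\ t\,f_{X,n}\big(Y_1(\thetab)\big)+(1-t)\,f_{X,n}\big(Y_2(\thetab)\big),
\]
with equality only when $Y_1(\thetab)=Y_2(\thetab)$, i.e.\ only when $Q_1(e^{i\thetab})=Q_2(e^{i\thetab})$.

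To finish I would observe that $Q_1-Q_2$ is a matrix trigonometric polynomial that is not identically zero, so by the same reasoning used for the zero set \eqref{zero_set} at the start of \cref{sec:solution_dual} --- a nonzero real-analytic function on $\Tbb^d$ has a Lebesgue-null zero set --- the set $\{\thetab:Q_1(e^{i\thetab})=Q_2(e^{i\thetab})\}\cup\Zcal(\Qb_1)\cup\Zcal(\Qb_2)$ is $\m$-null. Hence the pointwise inequality above is \emph{strict} for $\m$-almost every $\thetab$, and integrating over $\Tbb^d$ produces a strict inequality for the integral term; multiplying by $\nu/(\nu-1)>0$ and adding back the affine term $\langle\Qb,\Sigmab\rangle$ yields strict convexity of $J_\nu$ on $\Lscr_+$. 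The one genuinely delicate step is this last passage from pointwise strict convexity (\cref{lem_f_convex}) to strict convexity of the integral: it rests entirely on the fact that distinct multipliers produce trigonometric polynomials that disagree on a full-measure set, which is the same ``measure-zero zero set'' property already exploited for $\Zcal(\Qb)$; the rest is a routine pointwise-to-integral argument, valid uniformly over the closure $\Lscr_+$ precisely because the offending singular sets are null.
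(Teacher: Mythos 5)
Your proposal is correct and takes essentially the same route as the paper: drop the affine term, recognize the integrand as $f_{\Psi^{-1}(\thetab),\nu-1}(R_\Qb(\thetab))$, apply the pointwise strict convexity of \cref{lem_f_convex}, integrate, and handle boundary points by excluding the Lebesgue-null zero sets $\Zcal(\cdot)$. If anything you are slightly more careful than the paper, which passes the strict inequality through the integral without remarking that pointwise strictness holds only off the (null) set where $Q_1(e^{i\thetab})=Q_2(e^{i\thetab})$, a gap you close via the measure-zero vanishing set of the nonzero trigonometric polynomial $Q_1-Q_2$.
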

\begin{proof}
	Since the first term of $J_\nu$ in \eqref{J_dual_func} is linear in $\Qb$ and the constant $\frac{\nu}{\nu-1}>0$, we only need to show the strict convexity of the integral term
	\begin{equation}\label{g_Q}
	g(\Qb) :=\int_{\Tbb^d} \trace\left\{ \left[ \Psi^{-1} (\Psi^{-1} + \frac{1}{\nu} Q)^{-1} \right]^{\nu-1} \right\} \d\m.
	\end{equation}
	To ease the notation, let us write $\Psi$ and $Q$ as functions of $\thetab$, and define $R_\Qb:=\Psi^{-1} + \frac{1}{\nu} Q$. Then we have $g(\Qb)=\int_{\Tbb^d} f_{\Psi^{-1}(\thetab),\nu-1}(R_\Qb(\thetab)) \d\m${, where the function $f$ has been defined in the previous lemma}.
	We shall do {a} pointwise reasoning with the integrand and show convexity according to the definition. 
	
	Let $\Qb_1,\Qb_2 \in \Lscr_+$ be two different points, and for $t\in(0,1)$, we have $R_{t\Qb_1+(1-t)\Qb_2} = t R_{\Qb_1} + (1-t)R_{\Qb_2}$. {Notice that $R_{\Qb_1}\neq R_{\Qb_1}$.} Consider
	\begin{equation}
	\begin{split}
	& g(t\Qb_1+(1-t)\Qb_2) \\
	= & \int_{\Tbb^d} f_{\Psi^{-1}(\thetab),\nu-1}\left(tR_{\Qb_1}(\thetab) + (1-t)R_{\Qb_2}(\thetab)\right) \d\m \\
	< & \int_{\Tbb^d} \left[ t f_{\Psi^{-1}(\thetab),\nu-1}(R_{\Qb_1}(\thetab)) + (1-t) f_{\Psi^{-1}(\thetab),\nu-1}(R_{\Qb_2}(\thetab)) \right] \d\m \\
	= & tg(\Qb_1) + (1-t)g(\Qb_2),
	\end{split}
	\end{equation}
	where the inequality follows from the strict convexity of the integrand $f_{\Psi^{-1}(\thetab),\nu-1}$ by \cref{lem_f_convex}. Notice that the above reasoning still holds for $\Qb_1$ or $\Qb_2$ in $\partial\Lscr_+$, because we only need to {change} the domain of integration {to exclude the zero sets $\Zcal(t\Qb_1+(1-t)\Qb_2)$, $\Zcal(\Qb_1)$, and $\Zcal(\Qb_2)$ as defined in \cref{zero_set}.}
\end{proof}


\subsection{Existence}\label{subsec:exist}

As we will see next, the existence proof relies heavily on the feasibility \cref{assump_feasibility}.
We start by showing that the dual function $J_\nu$ is bounded from below on $\Lscr_+$, for which we need the following lemma.

\begin{lemma}\label{lem_innerprod_inequal}
	If \cref{assump_feasibility} holds, then there exists a real number $\alpha$ such that for any $\Qb\in\Lscr_+\,$, the inequality
	$\innerprod{\Qb}{\Sigmab} \geq \alpha$
	holds.
\end{lemma}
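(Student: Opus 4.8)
The plan is to bound $\innerprod{\Qb}{\Sigmab}$ from below by rewriting this inner product as an integral against the feasible measure $M_0$ supplied by \cref{assump_feasibility}, and then exploiting the defining sign condition of $\Lscr_+$ together with the positivity of $M_0$ on the ball $B$. Concretely, since $M_0$ satisfies the moment constraints $\int_{\Tbb^d} e^{i\innerprod{\kb}{\thetab}} \d M_0 = \Sigma_\kb$ for all $\kb\in\Lambda$, I would first observe
\[
\innerprod{\Qb}{\Sigmab} = \sum_{\kb\in\Lambda} \trace\!\left(Q_\kb \Sigma_\kb^*\right) = \sum_{\kb\in\Lambda} \trace\!\left(Q_\kb \int_{\Tbb^d} e^{-i\innerprod{\kb}{\thetab}}\,\d M_0^*\right) = \int_{\Tbb^d} \trace\!\left[Q(e^{i\thetab})\,\d M_0(\thetab)\right],
\]
using $Q_{-\kb}=Q_\kb^*$, the symmetry of $\Lambda$, and the Hermitian nonnegativity of $M_0$ (so that $\d M_0^* = \d M_0$ in the appropriate sense).

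Next I would insert the trivial identity $Q = (\nu\Psi^{-1} + Q) - \nu\Psi^{-1}$ to split the integral:
\[
\innerprod{\Qb}{\Sigmab} = \int_{\Tbb^d} \trace\!\left[(\nu\Psi^{-1} + Q)\,\d M_0\right] - \nu \int_{\Tbb^d} \trace\!\left[\Psi^{-1}\,\d M_0\right].
\]
For the first term: because $\Qb\in\Lscr_+$ means $\nu\Psi^{-1}+Q\geq 0$ on $\Tbb^d$, and $\d M_0$ is a nonnegative matricial measure, the integrand $\trace[(\nu\Psi^{-1}+Q)\,\d M_0]$ is a nonnegative measure, so this term is $\geq 0$ and may be discarded. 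For the second term: $\Psi$ is bounded and coercive by \cref{assump_Psi}, so $\Psi^{-1} \leq c\,I_m$ for some constant $c>0$, whence $\trace[\Psi^{-1}\,\d M_0] \leq c\,\trace[\d M_0]$, and $\trace M_0(\Tbb^d) = \trace \Sigma_{\zerob}$ is a fixed finite number (recall $\zerob\in\Lambda$). Therefore
\[
\innerprod{\Qb}{\Sigmab} \geq -\nu c\,\trace \Sigma_{\zerob} =: \alpha,
\]
which is the desired uniform lower bound, independent of $\Qb$.

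The only delicate point — and the main obstacle — is making the first manipulation rigorous when $\d M_0$ has a singular part, i.e., justifying that $\innerprod{\Qb}{\Sigmab}$ really equals $\int \trace[Q\,\d M_0]$ and that $\trace[(\nu\Psi^{-1}+Q)\,\d M_0]\geq 0$ for a measure rather than a density. I would handle this by passing to the dominating scalar measure: write $\d M_0 = M'_{0,\lambda}\,\d\lambda$ as in \cref{assump_feasibility}, with $M'_{0,\lambda}\geq 0$ holding $\lambda$-a.e.; then $\trace[(\nu\Psi^{-1}+Q)(e^{i\thetab})\,M'_{0,\lambda}(e^{i\thetab})]\geq 0$ pointwise $\lambda$-a.e.\ because it is the trace of a product of two Hermitian positive semidefinite matrices, and integrating against $\d\lambda\geq 0$ preserves the sign. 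The bound $\trace[\Psi^{-1}\,\d M_0]\leq c\,\trace[\d M_0]$ is argued the same way at the level of densities. I do not expect any further subtlety; the rational/coercive hypotheses on $\Psi$ are used only through the uniform bound $\Psi^{-1}\leq c\,I_m$, and the feasibility assumption enters precisely to guarantee the existence of $M_0$ with finite total mass $\trace\Sigma_{\zerob}$.
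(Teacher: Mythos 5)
Your proposal is correct and follows essentially the same route as the paper: rewrite $\innerprod{\Qb}{\Sigmab}=\trace\int_{\Tbb^d} Q\,\d M_0$ using the feasible measure from \cref{assump_feasibility}, split $Q=(\nu\Psi^{-1}+Q)-\nu\Psi^{-1}$, discard the first (nonnegative) term by the definition of $\Lscr_+$, and take the remaining term as the lower bound. The only difference is cosmetic: the paper sets $\alpha:=-\nu\trace\int_{\Tbb^d}\Psi^{-1}\d M_0$ directly, whereas you further bound this by $-\nu c\,\trace\Sigma_{\zerob}$ via coercivity of $\Psi$, which just makes the finiteness of the constant explicit.
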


\begin{proof}
    Given the nonnegative measure $M_0$ in the feasibility assumption, we can do the following computation:
	\begin{align}
	\innerprod{\Qb}{\Sigmab}& := \sum_{\kb\in\Lambda} \trace (Q_\kb\Sigma_\kb^*) \nonumber \\ 
	& = \sum_{\kb\in\Lambda} \trace \left( Q_\kb \int_{\Tbb^d} e^{-i\innerprod{\kb}{\thetab}} \d M_0 \right) \nonumber \\
	& = \trace \int_{\Tbb^d} Q \, \d M_0 \nonumber \\
	& = \nu \trace \int_{\Tbb^d} (\Psi^{-1} + \frac{1}{\nu}Q - \Psi^{-1}) \d M_0 \nonumber \\
	& = \nu \left[ \trace \int_{\Tbb^d} (\Psi^{-1} + \frac{1}{\nu}Q) \d M_0 - \trace \int_{\Tbb^d} \Psi^{-1} \d M_0 \right]. \label{innerprod_inequal_deriv}
	\end{align}
	Since $\Qb\in\Lscr_+$, the first term in
	\eqref{innerprod_inequal_deriv} is nonnegative. Therefore, we have  
	\begin{equation}
	\innerprod{\Qb}{\Sigmab}\geq -\nu \trace \int_{\Tbb^d} \Psi^{-1} \d M_0 =:\alpha.
	\end{equation}
\end{proof}

An immediate consequence is that for any $\Qb\in\Lscr_+$, the function value $J_\nu(\Qb)$ is bounded from below. To see this, just notice that the second term of $J_\nu$ in \eqref{J_dual_func} is nonnegative since the constant $\nu\geq2$. Therefore, we have
\begin{equation}\label{inequal_J_nu}
J_\nu(\Qb)\geq\innerprod{\Qb}{\Sigmab}\geq\alpha.
\end{equation}
In particular, this implies that the minimum of the dual function on $\Lscr_+$ is not $-\infty$.

\begin{lemma}\label{lem_lower_semicon}
	The dual function $J_\nu(\Qb)$ is lower-semicontinuous on $\Lscr_+$.
\end{lemma}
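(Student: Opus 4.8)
The plan is to peel off the part of $J_\nu$ that is manifestly continuous and treat the remaining integral by a Fatou-type argument. The first term $\innerprod{\Qb}{\Sigmab}$ in \eqref{J_dual_} is linear in the finitely many matrix coefficients $\{Q_\kb\}_{\kb\in\Lambda}$, hence continuous on $\Lscr_+$; so it suffices to show that the integral functional $g(\Qb)$ of \eqref{g_Q} is lower-semicontinuous. Fix $\Qb\in\Lscr_+$ and a sequence $\Qb_n\to\Qb$ with $\Qb_n\in\Lscr_+$. Since $\Lscr_+$ lives in the finite-dimensional real vector space of Hermitian matrix trigonometric polynomials with spectrum in $\Lambda$, coefficient-wise convergence $Q_{n,\kb}\to Q_\kb$ forces $Q_n(e^{i\thetab})\to Q(e^{i\thetab})$ for every $\thetab$ (indeed uniformly on $\Tbb^d$). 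Writing $R_\Qb:=\Psi^{-1}+\frac1\nu Q$ as in the proof of \cref{prop_strict_convex}, we get $R_{\Qb_n}(\thetab)\to R_\Qb(\thetab)$ pointwise on $\Tbb^d$.

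Next I would run a pointwise analysis of the integrands. Denote by $h_n(\thetab):=\trace\{(\Psi^{-1}(\thetab)R_{\Qb_n}(\thetab)^{-1})^{\nu-1}\}$ the integrand of $g(\Qb_n)$ and by $h(\thetab):=\trace\{(\Psi^{-1}(\thetab)R_\Qb(\thetab)^{-1})^{\nu-1}\}$ that of $g(\Qb)$; these are nonnegative because $\Psi^{-1}R^{-1}$ is similar to the positive semidefinite matrix $\Psi^{-1/2}R^{-1}\Psi^{-1/2}$, so it has nonnegative spectrum, and $\nu-1\ge1$ is an integer. Off the zero set $\Zcal(\Qb)$ of \eqref{zero_set} one has $R_\Qb(\thetab)>0$; since invertibility is an open condition and $R_{\Qb_n}(\thetab)\to R_\Qb(\thetab)$, for all $n$ large (depending on $\thetab$) the matrix $R_{\Qb_n}(\thetab)$ is invertible with $R_{\Qb_n}(\thetab)^{-1}\to R_\Qb(\thetab)^{-1}$, and continuity of $X\mapsto\trace\{(\Psi^{-1}(\thetab)X^{-1})^{\nu-1}\}$ on the invertible matrices gives $h_n(\thetab)\to h(\thetab)$. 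By \cref{assump_Psi}, $\det R_\Qb(\zb)$ is a rational function of $\zb\in\Cbb^d$; provided it does not vanish identically, $\Zcal(\Qb)$ is $\m$-null, so $h_n\to h$ $\m$-a.e. (In the degenerate case $\det R_\Qb\equiv0$ the same estimates, using that $\Psi$ is coercive so that $h_n(\thetab)\ge \lambda_{\min}(\Psi^{-1}(\thetab))^{\nu-1}\,\lambda_{\min}(R_{\Qb_n}(\thetab))^{-(\nu-1)}$ with $\lambda_{\min}(R_{\Qb_n}(\thetab))\to0$, force $h_n(\thetab)\to+\infty$ $\m$-a.e.\ while $g(\Qb)=+\infty$, so lower-semicontinuity at such $\Qb$ is automatic.)

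I would then conclude with Fatou's lemma: the $h_n$ are nonnegative and measurable with $\liminf_n h_n=h$ $\m$-a.e., so
\begin{equation}
\liminf_{n\to\infty} g(\Qb_n)=\liminf_{n\to\infty}\int_{\Tbb^d}h_n\,\d\m \;\ge\; \int_{\Tbb^d}\liminf_{n\to\infty}h_n\,\d\m \;=\;\int_{\Tbb^d}h\,\d\m=g(\Qb),
\end{equation}
the last equality using that $\Zcal(\Qb)$ is $\m$-null. Adding back the continuous term $\innerprod{\Qb}{\Sigmab}$ yields $\liminf_n J_\nu(\Qb_n)\ge J_\nu(\Qb)$, i.e.\ lower-semicontinuity of $J_\nu$ on $\Lscr_+$.

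The computation is routine once one trusts Fatou; the only delicate point is the behaviour of the integrands near the zero set $\Zcal(\Qb)$ when $\Qb\in\partial\Lscr_+$ (and in the degenerate subcase): there $h_n$ may blow up, but nonnegativity together with Fatou turns this into an advantage rather than an obstruction, and the rationality of $\Psi$ guarantees that the exceptional set is Lebesgue-null whenever $g(\Qb)$ is finite. This is the step I expect to require the most care to state cleanly.
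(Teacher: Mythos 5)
Your proposal is correct and follows essentially the same route as the paper: split off the continuous linear term $\innerprod{\Qb}{\Sigmab}$, establish a.e.\ pointwise convergence of the nonnegative integrands of $g$ off a Lebesgue-null zero set, and conclude by Fatou's lemma. The only difference is cosmetic: you argue lower-semicontinuity directly at every point of $\Lscr_+$ (and explicitly treat the degenerate possibility $\det R_\Qb\equiv 0$, which the paper implicitly rules out when asserting $\Zcal(\Qb)$ is null), whereas the paper handles interior points by smoothness and reserves the Fatou argument for boundary points.
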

\begin{proof}
	Derivative-based analysis can be carried out to show that $J_\nu$ is smooth on $\interior(\Lscr_+)$, and thus of course continuous. We only need to show the lower-semicontinuity for any $\bar{\Qb}\in\partial\Lscr_+$. More precisely, since the first term of $J_\nu$ is continuous, it is sufficient to show that the function $g(\Qb)$ defined in \eqref{g_Q} is lower-semicontinuous.
	
	We shall refer to the notation used in the proof of \cref{prop_strict_convex}. Let $\{\Qb_k\}_{k\geq1}\subset\Lscr_+$ be a sequence that converges to $\bar{\Qb}$ on the boundary. For almost every $\thetab$, the integrands
	$f_{\Psi^{-1}(\thetab),\nu-1}(R_{\Qb_k}(\thetab))$ $(k=1,2,\dots)$ are well defined and nonnegative.\footnote{Those $\thetab$ such that $\det R_{\Qb_k}(\thetab)=0$ $(k=1,2,\dots)$, which form a set of Lebesgue measure zero, are excluded.} Moreover, the pointwise limit
	\begin{equation}
	\lim_{k\to\infty} f_{\Psi^{-1}(\thetab),\nu-1}(R_{\Qb_k}(\thetab)) = f_{\Psi^{-1}(\thetab),\nu-1}(R_{\bar{\Qb}}(\thetab))
	\end{equation}
	holds {almost everywhere.}	
	By Fatou's lemma \cite[p.~23]{rudin1987real}, we have
	\begin{equation}
	g(\bar{\Qb}) \leq \liminf_{k\to\infty} g(\Qb_k).
	\end{equation}
	Since $\{\Qb_k\}_{k\geq1}$ is an arbitrary sequence tending to $\bar{\Qb}$, we have proved the lower-semicontinuity of the function $g$ at $\bar{\Qb}$, and the assertion of the lemma follows.
\end{proof}


Take a sufficiently large real number $r$, and define the (nonempty) sublevel set of the dual function:
\begin{equation}
J_\nu^{-1}(-\infty,r] := \{ \Qb\in\Lscr_+ : J_\nu(\Qb)\leq r \}.
\end{equation}
\cref{lem_lower_semicon} then implies that the sublevel set is closed.

For the next lemma, let us define the norm of the Lagrange multiplier 
\begin{equation}\label{Q_norm}
\|\Qb\|:= \sqrt{ \sum_{\kb\in\Lambda} \trace (Q_\kb Q^*_\kb) }.
\end{equation}


\begin{lemma}\label{lem_unbounded_Q}
	If a sequence $\{\Qb_k\}_{k\geq1}\subset \Lscr_+$ is such that $\|\Qb_k\|\to\infty$ as $k\to\infty$, then 
	\begin{equation}\label{limit_inf_Q}
	\lim_{k\to\infty} J_\nu(\Qb_{k}) = \infty.
	\end{equation}
\end{lemma}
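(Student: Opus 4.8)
The plan is to reduce the statement to a coercivity property of the \emph{linear} part of the dual function. First, recall that the integral term in the dual function $J_\nu$ is nonnegative (this was observed just before \eqref{inequal_J_nu}), because $\nu\geq 2$; consequently $J_\nu(\Qb)\geq\innerprod{\Qb}{\Sigmab}$ for every $\Qb\in\Lscr_+$, and it suffices to prove that $\innerprod{\Qb_k}{\Sigmab}\to\infty$ whenever $\{\Qb_k\}\subset\Lscr_+$ with $\|\Qb_k\|\to\infty$.

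I would argue by contradiction, in the usual ``recession direction'' fashion. If the above fails, then along a subsequence $\innerprod{\Qb_k}{\Sigmab}\leq C$ for some constant $C$. Write $\Qb_k=\rho_k\hat\Qb_k$ with $\rho_k:=\|\Qb_k\|\to\infty$ and $\|\hat\Qb_k\|=1$; by compactness of the unit sphere in the finite-dimensional space of Lagrange multipliers, a further subsequence satisfies $\hat\Qb_k\to\hat\Qb$ with $\|\hat\Qb\|=1$, so the trigonometric polynomial $\hat Q$ is not identically zero. Dividing $\innerprod{\Qb_k}{\Sigmab}\leq C$ by $\rho_k$ and passing to the limit gives $\innerprod{\hat\Qb}{\Sigmab}\leq 0$. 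On the other hand, $\Qb_k\in\Lscr_+$ means $\tfrac{\nu}{\rho_k}\Psi^{-1}+\hat Q_k\geq 0$ on $\Tbb^d$; since $\Psi^{-1}$ is bounded and the coefficients of $\hat Q_k$ converge (hence $\hat Q_k\to\hat Q$ uniformly), letting $k\to\infty$ yields $\hat Q\geq 0$ on $\Tbb^d$. Thus the whole matter reduces to showing that a nonzero, pointwise positive semidefinite matrix trigonometric polynomial $\hat Q$ must satisfy $\innerprod{\hat\Qb}{\Sigmab}>0$; this contradicts $\innerprod{\hat\Qb}{\Sigmab}\leq 0$ and proves \eqref{limit_inf_Q}.

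The strict inequality $\innerprod{\hat\Qb}{\Sigmab}>0$ is exactly where \cref{assump_feasibility} enters, and I expect it to be the main obstacle. As in the proof of \cref{lem_innerprod_inequal}, the moment constraints give $\innerprod{\hat\Qb}{\Sigmab}=\trace\int_{\Tbb^d}\hat Q\,\d M_0=\int_{\Tbb^d}\trace(\hat Q\,M'_{0,\lambda})\,\d\lambda\geq\int_{B}\trace(\hat Q\,M'_{0,\lambda})\,\d\lambda\geq 0$, using $\hat Q\geq 0$ and $M'_{0,\lambda}\geq 0$. Since $M'_{0,\lambda}$ is positive definite on $B$, and $\trace(AB)=0$ for positive semidefinite $A,B$ forces $AB=0$, the integrand $\trace(\hat Q\,M'_{0,\lambda})$ can vanish on $B$ only where $\hat Q$ does, that is, on $\{\thetab:\trace\hat Q(\thetab)=0\}$ — the zero set of a nontrivial nonnegative trigonometric polynomial, hence a Lebesgue-null set. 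The remaining and most delicate point is to rule out that this negligible set carries all the $\lambda$-mass of $B$; this is precisely what the (admittedly strong) \cref{assump_feasibility} provides, and it is immediate in the practically relevant case where the feasible measure may be taken absolutely continuous, so that $\lambda$ restricted to $B$ is Lebesgue measure (cf.\ the remark following \cref{assump_feasibility}). Granting it, $\int_B\trace(\hat Q\,M'_{0,\lambda})\,\d\lambda>0$, which contradicts $\innerprod{\hat\Qb}{\Sigmab}\leq 0$ and establishes \eqref{limit_inf_Q}; together with the lower-semicontinuity of \cref{lem_lower_semicon}, this coercivity is what makes the sublevel sets of $J_\nu$ compact in the existence argument for \cref{thm_dual}.
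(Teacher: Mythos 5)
Your argument is correct and follows essentially the same route as the paper: both reduce to the linear term via $J_\nu(\Qb)\geq\innerprod{\Qb}{\Sigmab}$, normalize $\Qb_k$ to extract a unit-norm limiting direction $\hat\Qb$ with $\hat Q\geq 0$ on $\Tbb^d$, and invoke \cref{assump_feasibility} to force $\innerprod{\hat\Qb}{\Sigmab}>0$; the only cosmetic difference is that you argue by contradiction with a bounded subsequence, whereas the paper shows $\eta:=\liminf_k\innerprod{\Qb_k^0}{\Sigmab}>0$ and concludes directly that $J_\nu(\Qb_k)\geq\tfrac{\eta}{2}\|\Qb_k\|\to\infty$. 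The ``delicate point'' you flag (that the Lebesgue-null zero set of $\hat Q$ might carry all the $\lambda$-mass of $B$) is handled at the same level of rigor in the paper, which passes from $Q^0_\infty M'_{0,\lambda}=0$ $\lambda$-a.e.\ to $Q^0_\infty$ vanishing on $B$ and then cites \cite[Lemma~1]{ringh2015multidimensional} to conclude $\Qb^0_\infty=\zerob$.
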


\begin{proof}
	Due to the relation \eqref{inequal_J_nu}, it suffices to prove the statement of the lemma for the inner product $\innerprod{\Qb}{\Sigmab}$. 
	Given the sequence {$\{\Qb_k\}_{k\geq1}$}, define $\Qb_k^0:=\Qb_k / \|\Qb_k\|$, which necessarily implies that $Q_k^0(e^{i\thetab})=Q_k(e^{i\thetab}) / \|\Qb_k\|$. Moreover, {for} each $\Qb_k\in {\Lscr_+}$, we have $\Psi^{-1}+\frac{1}{\nu}Q_k \geq 0$ on $\Tbb^d$. Consequently, the function
	\begin{equation}
	\Psi^{-1} + \frac{1}{\nu} Q_k^0 = \frac{1}{\|\Qb_k\|} (\Psi^{-1}+\frac{1}{\nu}Q_k) + \left( 1 - \frac{1}{\|\Qb_k\|} \right) \Psi^{-1}
	\end{equation}
	is positive definite on $\Tbb^d$ for sufficiently large $k$ since $\|\Qb_k\|\to\infty$. To summarize, the sequence $\{\Qb_k^0\}_{k\geq1}$ lives on the unit surface $\|\Qb\|=1$ (a compact set due to finite dimensionality), and we have $\Qb_k^0\in\Lscr_+$ for $k$ large enough.

	From {\cref{inequal_J_nu}}, we have
	\begin{equation}
	\innerprod{\Qb_k^0}{\Sigmab} = \frac{1}{\|\Qb_k\|} \innerprod{\Qb_k}{\Sigmab} \geq \frac{\alpha}{\|\Qb_k\|} \to 0.
	\end{equation}
	Define the real quantity $\eta:= \liminf_{k\to\infty} \innerprod{\Qb_k^0}{\Sigmab}$. Then it must hold that $\eta\geq 0$. By a property of the limit inferior, we know that $\{\Qb_k^0\}_{k\geq1}$ has a subsequence $\{\Qb^0_{k_\ell}\}_{\ell\geq1}$ such that $\innerprod{\Qb_{k_\ell}^0}{\Sigmab}\to\eta$ as $\ell\to\infty$. Since $\{\Qb^0_{k_\ell}\}_{\ell\geq 1}$ is contained on the unit surface, it has a convergent subsequence denoted by $\{\Qb^0_{k_j}\}_{j\geq 1}$. Define the limit
	\begin{equation}
	\Qb^0_{\infty}:=\lim_{j\to\infty}\Qb_{k_j}^0.
	\end{equation}
	Then by the continuity of the inner product, we have $\eta=\innerprod{\Qb^0_{\infty}}{\Sigmab}$.
	
	Next, we show that $\Qb^0_\infty \in \interior\Lscr_+$. Since $\Qb_k\in {\Lscr_+}$, it holds that $\Psi^{-1}+\frac{1}{\nu}Q_k\geq0$ on $\Tbb^d$ for all $k$. This implies that
	\begin{equation}
	\frac{\Psi^{-1}}{\|\Qb_{k_j}\|} + \frac{1}{\nu} Q^0_{k_j} \geq 0 \text{ on } \Tbb^d \quad \forall j.
	\end{equation}
	The function on the left side of the above inequality converges uniformly to the polynomial $\frac{1}{\nu} Q^0_\infty$. Hence we must have $\frac{1}{\nu} Q^0_\infty\geq 0$ on $\Tbb^d$. As a consequence, $\Psi^{-1} + \frac{1}{\nu} Q^0_\infty>0$ on $\Tbb^d$ and indeed $\Qb^0_\infty \in\interior \Lscr_+$.
	
	The next step is to prove that $\eta=\innerprod{\Qb^0_{\infty}}{\Sigmab}>0$. Following the computation in the proof of \cref{lem_innerprod_inequal}, we arrive at
	\begin{equation}\label{eq:Q0inf}
	\begin{split}
	\innerprod{\Qb^0_\infty}{\Sigmab} & = \trace \int_{\Tbb^d} Q^0_\infty \d M_0 \\
	& = \int_{\Tbb^d} \trace(Q^0_\infty M'_{0,\lambda}) \d\lambda.
	\end{split}
	\end{equation}
    Since we have just proved that $Q^0_\infty$ is positive semidefinite on $\Tbb^d$, the integrand above takes nonnegative real values. Thus, $\innerprod{\Qb^0_{\infty}}{\Sigmab}=0$ implies that $\trace(Q^0_\infty M'_{0,\lambda})=0$ $\lambda$-{almost everywhere}, which gives $Q^0_\infty M'_{0,\lambda}=0$ due to positive semidefiniteness. By the second part of \cref{assump_feasibility}, on the open ball $B\subset\Tbb^d$, the polynomial $Q^0_\infty(e^{i\thetab})$ vanishes identically. By \cite[Lemma~1]{ringh2015multidimensional}, we must have $\Qb^0_\infty=\zerob$, which is a contradiction since we also have $\|\Qb^0_\infty\|=1$. Therefore, it must hold that $\eta>0$.
	
	Finally, since {$\eta = \liminf_{k\to\infty} \innerprod{\Qb_k^0}{\Sigmab}$} by definition, there exists an integer $k>0$ such that {$\innerprod{\Qb^0_{j}}{\Sigmab} > \eta/2$ for all $j> k$}. Then for {$j>k$ we have} 
	\begin{equation}
	\begin{split}
	J_\nu(\Qb_{j}) & \geq  \innerprod{\Qb_{j}}{\Sigmab} \\
	& = \|\Qb_{j}\| \innerprod{\Qb_{j}^0}{\Sigmab} \\
	& \geq \frac{\eta}{2} \|\Qb_{j}\|
	\end{split}
	\end{equation}
    which tends to infinity as $j\to\infty$.
\end{proof}

As a consequence of \cref{lem_unbounded_Q}, the sublevel set $J_\nu^{-1}(-\infty,r]$ has to be bounded. Recall that the dual variable {belongs to} a finite-dimensional space. Therefore, $J_\nu^{-1}(-\infty,r]$ is a compact subset of $\Lscr_+$. Putting {these} pieces together, we have a lower-semicontinuous function $J_\nu$ whose sublevel set is compact. By the extreme value theorem, the function $J_\nu$ attains its minimum over $\Lscr_+$. This concludes the existence proof.


\section{An integrability condition and the interior solution}
\label{sec:integrability}

\cref{thm_dual} in the previous section does not exclude the possibility that the optimal $\Qb^\circ$ may fall on the boundary $\partial\Lscr_+$, in which case it is not necessarily a stationary point and the corresponding primal ``variable'' $\Phi_\nu(\Qb^\circ)$ in \eqref{Phi_nu} may not satisfy the moment equations. In other words, the measure $\Phi_\nu(\Qb^\circ)\d\m$ may be primal infeasible. {The aim of this section is to show that the parameter $\nu$, parametrizing the divergence family, can cure such primal infeasibility. More precisely, choosing $\nu$ sufficiently large,} the existence of an interior minimizer $\Qb^\circ$ {is guaranteed}. In that case, we can conclude the primal optimality of the absolutely continuous measure $\Phi_\nu(\Qb^\circ)\d\m$ with a rational coercive density.

\begin{proposition}\label{prop_int_blowup}
If $\nu\geq\frac{md}{2}+1$, then $J_\nu(\bar{\Qb})=\infty$ for $\bar{\Qb}$ on the boundary {$\partial\Lscr_+$} of the feasible set.
\end{proposition}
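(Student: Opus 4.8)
The plan is to reduce the claim to showing that the integral term of $J_\nu(\bar{\Qb})$ in \eqref{J_dual_} equals $+\infty$; since the first term $\langle\bar{\Qb},\Sigmab\rangle$ is a finite sum of traces of fixed matrices, this is enough. Set $R:=\Psi^{-1}+\frac1\nu\bar{Q}$ and recall the notation $f_{X,n}(Y)=\trace\{(XY^{-1})^n\}$ from \cref{lem_f_convex}, so that the integral term is $\frac{\nu}{\nu-1}\,g(\bar{\Qb})$ with $g(\bar{\Qb})=\int_{\Tbb^d}f_{\Psi^{-1}(\thetab),\nu-1}(R(\thetab))\,\d\m$ as in \eqref{g_Q}, the domain of integration excluding the null set $\Zcal(\bar{\Qb})$. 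Since $\frac{\nu}{\nu-1}>0$, it suffices to prove $g(\bar{\Qb})=\infty$.

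The first step is a pointwise lower bound on the integrand in terms of $\det R$. Conjugating by $\Psi^{-1/2}$ shows $\Psi^{-1}R^{-1}$ is similar to the positive definite matrix $\Psi^{-1/2}R^{-1}\Psi^{-1/2}$; since $\nu-1$ is a positive integer, $f_{\Psi^{-1},\nu-1}(R)=\trace\{(\Psi^{-1/2}R^{-1}\Psi^{-1/2})^{\nu-1}\}$ is a sum of $(\nu-1)$-st powers of positive eigenvalues, hence at least $\left(\tfrac1m\trace(\Psi^{-1}R^{-1})\right)^{\nu-1}$ (the largest eigenvalue dominates the average). Using $\trace(AB)\ge\lambda_{\min}(A)\trace(B)$ for $A,B\succeq0$, the uniform bound $\lambda_{\min}(\Psi^{-1}(e^{i\thetab}))\ge c_1>0$ from boundedness of $\Psi$ in \cref{assump_Psi}, and $\trace(R^{-1})\ge 1/\lambda_{\min}(R)\ge(\det R)^{-1/m}$, one gets, for a.e.\ $\thetab$,
\[
f_{\Psi^{-1}(\thetab),\nu-1}(R(\thetab))\ \ge\ \left(\tfrac{c_1}{m}\right)^{\nu-1}\left(\det R(e^{i\thetab})\right)^{-(\nu-1)/m}.
\]

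The second step is to bound the vanishing of $h(\thetab):=\det R(e^{i\thetab})$ near its zero set. Because $\bar{\Qb}\in\partial\Lscr_+$, the matrix $R(e^{i\thetab})$ depends smoothly on $\thetab$ (by \cref{assump_Psi}) and is positive semidefinite but singular at some $\thetab_0\in\Tbb^d$, so $h$ is a smooth nonnegative function with $h(\thetab_0)=0$; moreover $h$ is not a.e.\ zero, as recalled at the start of \cref{sec:solution_dual}. Since $\thetab_0$ is a global minimizer, $\nabla h(\thetab_0)=0$, and Taylor's theorem yields a ball $B(\thetab_0,\rho)\subset\Tbb^d$ and a constant $M>0$ with $h(\thetab)\le M|\thetab-\thetab_0|^2$ there. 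Combining this with the pointwise bound,
\[
g(\bar{\Qb})\ \ge\ \left(\tfrac{c_1}{m}\right)^{\nu-1}M^{-(\nu-1)/m}\int_{B(\thetab_0,\rho)}|\thetab-\thetab_0|^{-2(\nu-1)/m}\,\d\m(\thetab),
\]
and in polar coordinates the last integral is a constant times $\int_0^\rho r^{\,d-1-2(\nu-1)/m}\,\d r$, which diverges exactly when $2(\nu-1)/m\ge d$, i.e.\ when $\nu\ge\frac{md}{2}+1$. Under the hypothesis this holds, so $g(\bar{\Qb})=\infty$ and hence $J_\nu(\bar{\Qb})=\infty$.

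I expect the pointwise lower bound to be the main obstacle: one must control the non-Hermitian product $\Psi^{-1}R^{-1}$ (done via the similarity to $\Psi^{-1/2}R^{-1}\Psi^{-1/2}$) and, more delicately, collapse the chain of trace/eigenvalue inequalities into a clean power of $\det R$ without degrading the exponent, so that the threshold emerges precisely as $\nu=\frac{md}{2}+1$ rather than something larger. The smoothness and measure bookkeeping for $h$ is comparatively routine, since only the one-sided estimate $h(\thetab)\le M|\thetab-\thetab_0|^2$ --- valid regardless of the order of vanishing of $\det R$ --- is needed, so no resolution of the singular set $\Zcal(\bar{\Qb})$ is required.
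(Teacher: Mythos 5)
Your proof is correct, and it reaches the paper's conclusion along the same overall strategy (localize near a point $\thetab_0$ where $\det R=0$, bound the integrand from below by a constant times $(\det R)^{-(\nu-1)/m}$, then use the quadratic vanishing of a nonnegative $C^2$ function at an interior minimum and polar coordinates to get divergence precisely when $2(\nu-1)/m\ge d$), but the intermediate machinery differs in a way worth noting. Where the paper invokes the trace inequality of Bushell to pass from $\trace[(\Psi^{-1}R^{-1})^{\nu-1}]$ to $\trace[R^{-(\nu-1)}]$, then uses the adjugate identity together with \cref{lem_tr_det} to reach $(\det R)^{-(\nu-1)/m}$, you obtain the same exponent by elementary eigenvalue inequalities (similarity to the Hermitian matrix $\Psi^{-1/2}R^{-1}\Psi^{-1/2}$, the bound $\trace(A^{\nu-1})\ge(\trace A/m)^{\nu-1}$, $\trace(AB)\ge\lambda_{\min}(A)\trace(B)$, and $1/\lambda_{\min}(R)\ge(\det R)^{-1/m}$), so no external citation is needed. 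Moreover, where the paper clears denominators to produce the matrix polynomial $N_R$, sets $\nu-1=m\tilde\nu$, and applies \cref{prop_Johan} to the trigonometric polynomial $\det N_R$, you apply the Taylor/Hessian argument directly to $h=\det R$; this re-derives \cref{prop_Johan} inline and, as a bonus, dispenses with the paper's implicit divisibility assumption that $m$ divides $\nu-1$, since your final integral $\int_0^\rho r^{d-1-2(\nu-1)/m}\,\d r$ handles any real exponent. The one point to be explicit about is the regularity you use: your argument needs $h=\det R$ to be $C^2$ near $\thetab_0$, i.e., $\Psi^{-1}$ smooth there, whereas the paper's denominator-clearing applies the quadratic bound to the genuinely real-analytic function $\det N_R(e^{i\thetab})$ and only needs $d_R=\det N_\Psi$ to stay positive nearby; under the intended reading of \cref{assump_Psi} (rational, bounded, and coercive, hence well defined with nonvanishing denominators on $\Tbb^d$) this is not a gap, but you should state it as the place where rationality and coercivity of the prior enter.
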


\begin{proof}
	Let us rename $R:=\Psi^{-1}+\frac{1}{\nu}\bar{Q}$. Since the term $\langle\bar{\Qb},\Sigmab\rangle$ is finite and the scalar $\frac{\nu}{\nu-1}>0$, it suffices to establish that the integral {in \cref{J_dual_func}} {diverges}. Moreover, the integrand is nonnegative, and hence we can restrict our attention to a {closed} neighborhood $N(\thetab_0)$ of some $\thetab_0$ where $\det R(e^{i\thetab_0})=0$. By the trace inequality in \cite{bushell1990trace}, we have
	\begin{equation}\label{inequal_part1}
	\int_{N(\thetab_0)} \trace\left[ ( \Psi^{-1} R^{-1} )^{\nu-1} \right] \d\m \geq \beta \int_{N(\thetab_0)} \trace \left[ R^{-(\nu-1)} \right] \d\m
	\end{equation}
	for some constant $\beta>0$ since the eigenvalues of $\Psi$ are bounded. Continuing \eqref{inequal_part1}, we have
	\begin{equation}\label{inequal_part2}
	\begin{split}
	\int_{N(\thetab_0)} \trace \left[ R^{-(\nu-1)} \right] \d\m & = \int_{N(\thetab_0)} \trace \left[ \left(\frac{\adjugate R}{\det R}\right)^{\nu-1} \right] \d\m \\
	 & = \int_{N(\thetab_0)} \frac{1}{(\det R)^{\nu-1}} \trace \left[ \left(\adjugate R\right)^{\nu-1} \right] \d\m \\
	 & \geq \int_{N(\thetab_0)} \frac{m}{(\det R)^{\nu-1}} \left[ \det \left(\adjugate R\right) \right]^{\frac{\nu-1}{m}} \d\m
	\end{split}
	\end{equation}
	where $\adjugate$ denotes the adjugate matrix, and we have used \cref{lem_tr_det} in the {Appendix} for the last inequality. Using the relation $\det(\adjugate A)=(\det A)^{m-1}$ for any square $m\times m$ matrix $A$, we can simplify the last expression to obtain
	\begin{equation}\label{inequal_part3}
	\int_{N(\thetab_0)} \trace \left[ R^{-(\nu-1)} \right] \d\m \geq m \int_{N(\thetab_0)} \left(\det R\right)^{-\frac{\nu-1}{m}} \d\m.
	\end{equation}
	Given {\cref{assump_Psi}} on the prior $\Psi$, we can write $\Psi=\frac{N_\Psi}{d_\Psi}$, where $d_\Psi$ is a product of all the denominators of $\Psi_{jk}$ (element of $\Psi(\zb)$ at $(j,k)$ position, $j,k=1,\dots,m$) and $N_\Psi$ is a matrix of polynomials. It follows that $\det N_\Psi(e^{i\thetab})\neq 0$ for all $\thetab\in\Tbb^d$. Back to \eqref{inequal_part3}, we have
	\begin{equation}
	\begin{split}
	R = \Psi^{-1}+\frac{1}{\nu}\bar{Q} & = \frac{d_\Psi}{\det N_\Psi} \adjugate N_\Psi + \frac{1}{\nu}\bar{Q} \\
	 & = \frac{1}{\det N_\Psi} \left( d_\Psi \adjugate N_\Psi + \frac{\det N_\Psi}{\nu}\bar{Q} \right) := \frac{N_R}{d_R},
	\end{split}
	\end{equation}
	where $N_R$ is certainly a matrix polynomial, and $\det N_R(e^{i\thetab_0})=0$. In particular, we can always make $d_R(e^{i\thetab_0})>0$ in a neighborhood of $\thetab_0$ so that $N_R$ is positive semidefinite. Let $\nu-1=m\tilde{\nu}$ for some positive integer $\tilde{\nu}$. Now we can continue \eqref{inequal_part3}:
	\begin{equation}\label{inequal_part4}
	\begin{split}
	\int_{N(\thetab_0)} \trace \left[ R^{-(\nu-1)} \right] \d\m & \geq m \int_{N(\thetab_0)} \left(\det R\right)^{-\tilde{\nu}} \d\m \\
     & = m \int_{N(\thetab_0)} \frac{d_R^{m\tilde{\nu}}}{(\det N_R)^{\tilde{\nu}}} \d\m \\
	 & \geq m d_{\textrm{min}}^{m\tilde{\nu}} \int_{N(\thetab_0)} \left(\det N_R\right)^{-\tilde{\nu}} \d\m, \\
	\end{split}
	\end{equation}
	where $d_{\textrm{min}}:=\min_{\thetab\in N(\thetab_0)} d_R(\thetab)$ is a positive constant.
	By \cref{prop_Johan} in the {Appendix}, the last integral is unbounded if $\tilde{\nu}\geq\frac{d}{2}$ which is the same as $\nu\geq\frac{md}{2}+1$.
\end{proof}

It then follows that we can \emph{always} choose an integer $\nu\geq\frac{md}{2}+1$, such that the optimal dual variable $\Qb^\circ$ lies in the interior of $\Lscr_+$, and the spectral density $\Phi_\nu(\Qb^\circ)$ solves the moment equations.

\begin{remark}
	The above bound for $\nu$ is not tight in the unidimensional case. Indeed, letting $d=1$ and $\nu=2$, the inequality for $\nu$ implies that $m\leq2$. However, according to \cite{Z-15}, there is no such restriction for the number of variables in the $1$-d case.	
	In fact, the unidimensional case is very special because one can reason directly with the integrand $r:=\trace \left[ R^{-(\nu-1)} \right]$ in \eqref{inequal_part2} without {making the restriction $\nu=1+m\tilde{\nu}$ for an integer $\tilde{\nu}$}. The function $r(z)$ is a rational function of one variable, and $r(e^{i\theta})\to\infty$ as $\theta\to\theta_0$, which means that $e^{i\theta_0}$ must be a pole, and hence the integral necessarily blows up.
	This type of reasoning does not seem to extend to the multidimensional case.
\end{remark}

\section{Concerning the singular measure}\label{sec:measure}

When the condition for $\nu$ in \cref{prop_int_blowup} is not met, then it is not guaranteed that the dual problem has an interior solution. In that case, we need to add a singular measure to the absolutely continuous part $\Phi_\nu(\Qb^\circ)\d\m$ in order to achieve primal {feasibility}. The main technical tool here is Theorem 25.6 in \cite{rockafellar1970convex} which gives a characterization of the subdifferential of a differentiable convex function.

Assume that $\Qb^\circ \in \partial\Lscr_+$ is the unique minimizer of $J_\nu(\Qb)$. Then the all-zero vector $\zerob$ belongs to the subdifferential of $J_\nu$ at $\Qb^\circ$, denoted with $\partial J_\nu (\Qb^\circ)$. Since we have shown in \cref{subsec:exist} that the dual function is lower-semicontinuous, bounded from below, and its domain has a nonempty interior, according to \cite[Theorem~25.6]{rockafellar1970convex}, its subdifferential admits a decomposition
\begin{equation}\label{thm_Rockafellar}
\partial J_\nu (\Qb^\circ) = \mathrm{cl} \, (\mathrm{conv} \, S(\Qb^\circ)) + K(\Qb^\circ),
\end{equation}
where $\mathrm{cl} \, (\mathrm{conv} \, \cdot \,)$ denotes the closure of the convex hull of a set, $S(\Qb^\circ)$ is the set of all limit points of sequences of the form $\nabla J_\nu(\Qb_1), \nabla J_\nu(\Qb_2),\dots$ such that $\Qb_\ell\in\interior(\Lscr_+)$ and $\Qb_\ell$ tends to $\Qb^\circ$, and $K(\Qb^\circ):=\{\Sigmab_K \,:\, \innerprod{\Sigmab_K}{\Qb-\Qb^\circ} \leq 0 \textrm{ for all } \Qb\in\Lscr_+\}$ is the normal cone.

\subsection{Characterization of $S(\Qb^\circ)$}
In order to compute the gradient of $J_\nu$, we need to fix a basis for the finite-dimensional object $\Qb$. More precisely, let $\{\Xb_j\}_{j=1}^{N}$ be an orthonormal basis, so that we can write $\Qb=\sum_{j=1}^{N} q_j \Xb_j$ with real coordinates $q_j$. With a slight abuse of notation, we can regard $J_\nu$ as a function of the coordinate vector $\qb$.
To ease the notation, let us also define the linear operator that sends a Hermitian matricial measure on $\Tbb^d$ to its Fourier coefficients with indices in the set $\Lambda$
\begin{equation}
\Gamma:\, \d M \mapsto \left\{ \Sigma_{\kb}=\int_{\Tbb^d} e^{i\innerprod{\kb}{\thetab}} \, \d M \right\}_{\kb\in\Lambda}.
\end{equation}
Then according to \eqref{delta_J_nu}, the partial derivative can be expressed as
\begin{equation}
\frac{\partial J_\nu(\qb)}{\partial q_j} = \delta J_\nu(\qb;\Xb_j) = \innerprod{\Xb_j}{\Sigmab - \Gamma(\Phi_\nu(\Qb)\d\mu)}.
\end{equation}

Now take a vector $\vb \in S(\Qb^\circ)$. Then there exists a sequence $\{\qb_k\}_{k\geq1}\subset\interior(\Lscr_+)$ such that $\qb_k\to\qb^\circ$ and the gradient sequence $\{\nabla J_\nu(\qb_k)\}_{k\geq1}$ converges to $\vb$. This necessarily implies that the sequence of moments $\Gamma(\Phi_\nu(\Qb_k)\d\mu)$ converges as $k\to \infty$. In particular, convergence of the zeroth moments $\int_{\Tbb^d}\Phi_\nu(\Qb_k)\d\mu$ means that the matricial total variations of the sequence of measures $\Phi_\nu(\Qb_k)\d\mu$ are bounded. Identify each matricial measure as a linear functional on the space of Hermitian matrix-valued continuous functions in the sense of \cref{Riesz_represent} in the Appendix. Then by the Banach-Alaoglu theorem, there is a subsequence of $\Phi_\nu(\Qb_k)\d\mu$ that converges in weak* to some Hermitian measure. {Clearly,} the rational functions $\Phi_\nu(\Qb_k)$ converge uniformly to $\Phi_\nu(\Qb^\circ)$ in any compact subset of $\Tbb^d\backslash\Zcal(\Qb^\circ)$ {where the zero set has been defined in \cref{zero_set}. Therefore,} the weak* limit must have the form $\Phi_\nu(\Qb^\circ)\d\mu + \d M_S$ where $M_S$ is a nonnegative matricial measure satisfying
\begin{equation}\label{constraint_M_S}
\trace \int_{\Tbb^d} (\nu \Psi^{-1} + Q^\circ) \d M_S = 0.
\end{equation}
In other words, $M_S$ is only supported in the zero set $\Zcal(\Qb^\circ)$, and more specifically, in the kernel of the matrix $(\nu \Psi^{-1} + Q^\circ)(e^{i\thetab})$. To summarize, we have the relation
\begin{equation}
\vb \in \left\{\ub \,:\, u_j = \innerprod{\Xb_j}{\Sigmab - \Gamma(\Phi_\nu(\Qb^\circ)\d\mu + \d M_S)} \textrm{ with } M_S\geq0 \textrm{ and satisfies \eqref{constraint_M_S}}\right\}.
\end{equation}
It is not difficult to see that the latter set is convex and closed.

\subsection{Characterization of $K(\Qb^\circ)$}
Let us first recall that the dual cone $\overline{\Cfrak}_+$ of the set of nonnegative matrix polynomials {is} the closure of the following set
\begin{equation}\label{dual_cone}
\Cfrak_+ := \left\{ \Sigmab \,:\, \innerprod{\Sigmab}{\Qb}>0 \text{ for all } \Qb\neq 0 \text{ such that } Q(e^{i\thetab})\geq 0 \ \forall \thetab\in\Tbb^d \right\}.
\end{equation}
The normal cone at $\Qb^\circ$ in \eqref{thm_Rockafellar} is related to the dual cone as stated in the next lemma.

\begin{lemma}\label{lem_dual_cone}
	If $\Sigmab_K \in K(\Qb^\circ)$, then $-\Sigmab_K \in \overline{\Cfrak}_+$.
\end{lemma}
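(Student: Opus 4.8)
\textbf{Proof plan for \cref{lem_dual_cone}.}
The goal is to show that the normal cone $K(\Qb^\circ)$ at an optimal boundary point, negated, sits inside the closure of the dual cone $\overline{\Cfrak}_+$. The plan is to unwind the two definitions and reduce the claim to a statement about nonnegative trigonometric polynomials, using the fact that $\Qb^\circ\in\partial\Lscr_+$ is an \emph{interior} point of the bigger set $\Lscr_+$ only up to the obstruction coming from $\Psi^{-1}+\frac1\nu Q$ being singular somewhere. First I would take $\Sigmab_K\in K(\Qb^\circ)$, so by definition $\innerprod{\Sigmab_K}{\Qb-\Qb^\circ}\le 0$ for every $\Qb\in\Lscr_+$, i.e. $\innerprod{\Sigmab_K}{\Qb}\le \innerprod{\Sigmab_K}{\Qb^\circ}$ on all of $\Lscr_+$.

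The key step is to exploit the structure of $\Lscr_+$: it is (up to the ``not identically zero'' condition, which is irrelevant for the cone geometry here) a translate of a cone. Precisely, if $\Qb\in\Lscr_+$ then $\Qb + \Pb\in\Lscr_+$ for every $\Pb$ with $P(e^{i\thetab})\ge 0$ on $\Tbb^d$, since $\nu\Psi^{-1}+Q+P \ge \nu\Psi^{-1}+Q\ge 0$. Applying the normal-cone inequality with $\Qb=\Qb^\circ+t\Pb$ for $t>0$ and any such nonnegative $\Pb$, I get $t\innerprod{\Sigmab_K}{\Pb}\le 0$, hence $\innerprod{\Sigmab_K}{\Pb}\le 0$, i.e. $\innerprod{-\Sigmab_K}{\Pb}\ge 0$ for every nonnegative matrix trigonometric polynomial $\Pb$ (here one must check the pairing is the same real bilinear form used in the definition of $\Cfrak_+$, which it is — $\innerprod{\cdot}{\cdot}$ is $\sum_\kb\trace(\cdot\,\cdot^*)$ throughout). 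This says exactly that $-\Sigmab_K$ lies in the dual cone of the nonnegative-polynomial cone in the ``$\ge 0$'' sense. Comparing with \cref{dual_cone}, $\Cfrak_+$ is defined with a strict inequality ``$>0$'', so its closure $\overline{\Cfrak}_+$ is precisely the set of $\Sigmab$ with $\innerprod{\Sigmab}{\Qb}\ge 0$ for all nonnegative $Q$; therefore $-\Sigmab_K\in\overline{\Cfrak}_+$, which is the assertion.

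The main obstacle I anticipate is the passage from the strict-inequality definition of $\Cfrak_+$ to the non-strict one for its closure: one needs that the closure of $\{\Sigmab:\innerprod{\Sigmab}{\Qb}>0\ \forall\, 0\neq Q\ge 0\}$ equals $\{\Sigmab:\innerprod{\Sigmab}{\Qb}\ge 0\ \forall\, Q\ge 0\}$. This is a standard convex-duality fact (the dual cone of any cone is closed, and the dual of the nonnegative-polynomial cone has nonempty interior because, e.g., the moments of a coercive density lie strictly inside it), but it requires knowing that the nonnegative-polynomial cone is \emph{solid} and \emph{pointed} in the relevant finite-dimensional space of Hermitian pseudo-polynomials indexed by $\Lambda$ — this is where \cref{assump_feasibility}-type nondegeneracy and the finite dimensionality (already invoked repeatedly) come in. A secondary, more cosmetic point is to make sure the ``not identically zero'' clause in the definition of $\Lscr_+$ does not break the translation argument: since $\Qb^\circ\in\partial\Lscr_+$ already has $\nu\Psi^{-1}+Q^\circ\not\equiv 0$ and adding a nonnegative $\Pb$ only increases it, $\Qb^\circ+t\Pb$ stays in $\Lscr_+$, so there is no issue.
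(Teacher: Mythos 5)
Your proof is correct and follows essentially the same route as the paper: both restrict the normal-cone inequality to nonnegative matrix polynomials (which lie in $\Lscr_+$), use homogeneity/scaling to eliminate the $\Qb^\circ$ offset, and then invoke the identification of $\overline{\Cfrak}_+$ with the non-strict dual cone. Your variant of testing along $\Qb^\circ+t\Pb$ is marginally more direct than the paper's rescaling-by-contradiction at $t\Qb$, and you make explicit the closure step that the paper leaves implicit, but these are cosmetic differences.
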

\begin{proof}
	Notice first that the set of $\Qb$ such that $Q\geq0$ is contained in our feasible set $\Lscr_+$. Take $\Sigmab_K \in K(\Qb^\circ)$ and rename $\Yb=-\Sigmab_K$. Then it means that $\innerprod{\Yb}{\Qb-\Qb^\circ} \geq 0 \textrm{ for all } \Qb\in\Lscr_+$, which implies that
	\begin{equation}\label{cond_Y}
	\innerprod{\Yb}{\Qb-\Qb^\circ} \geq 0 \quad \forall Q\geq0.
	\end{equation}
	Suppose that there exists some $Q\geq0$ such that $\innerprod{\Yb}{\Qb}<0$. Then the condition \eqref{cond_Y} can never hold since we can rescale the polynomial to make the inner product $\innerprod{\Yb}{\Qb}$ tend to $-\infty$. Therefore, we have $\innerprod{\Yb}{\Qb} \geq 0$ for all $Q\geq0$, and the assertion follows.
\end{proof}

Now take an arbitrary $\Sigmab_K \in K(\Qb^\circ)$. The above lemma implies that we have the representation
\begin{equation}\label{rep_normal_cone}
-\Sigmab_{K,\kb} = \int_{\Tbb^d} e^{i\innerprod{\kb}{\thetab}} \, \d M_K \quad \forall \kb\in\Lambda
\end{equation}
for some nonnegative matricial measure $M_K$ (cf.~\cite[Proposition 1, p.~1059]{Georgiou-06}). Our remaining task is to show the existence of such a measure under the same constraint \eqref{constraint_M_S} for $M_S$. This appears quite nontrivial, and we have only managed to achieve the result when the prior $\Psi$ {is {the inverse of} a matrix polynomial}, {i.e., a matricial spectral desity of the autoregressive type.}

\begin{proposition}
	If $\Psi=P^{-1}$ where $P(e^{i\thetab}):=\sum_{\kb\in\Lambda} P_\kb e^{-i\innerprod{\kb}{\thetab}}$ is a strictly positive matrix polynomial, then there exists a nonnegative matricial measure $M_K$ such that \eqref{rep_normal_cone} holds and $\trace \int_{\Tbb^d} (\nu \Psi^{-1} + Q^\circ) \d M_K = 0$.
\end{proposition}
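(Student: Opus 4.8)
The plan is to recast the claim as a statement about the normal cone of the cone of nonnegative matrix trigonometric polynomials, exploiting that for $\Psi = P^{-1}$ the matrix function $\nu\Psi^{-1}+Q = \nu P + Q$ is itself a Hermitian matrix trigonometric polynomial. Let $\Cfrak$ denote the closed convex cone of coefficient tuples $\{T_\kb\}_{\kb\in\Lambda}$ for which $T(e^{i\thetab}):=\sum_{\kb\in\Lambda}T_\kb e^{-i\innerprod{\kb}{\thetab}}$ is positive semidefinite on $\Tbb^d$, put $\Pb:=\{P_\kb\}_{\kb\in\Lambda}$ and $\Tb:=\nu\Pb+\Qb^\circ$. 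Then $\overline{\Lscr_+}$ is exactly the affine translate $-\nu\Pb+\Cfrak$, and since $\Qb^\circ\in\partial\Lscr_+$ the polynomial $T=\nu P+Q^\circ$ is positive semidefinite everywhere on $\Tbb^d$ and singular at some point, i.e.\ $\Tb\in\partial\Cfrak$. The whole proof rests on this affine picture; in particular, the closed half-space conditions defining $K(\Qb^\circ)$ over $\Lscr_+$ extend to the closure $\overline{\Lscr_+}$.

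First I would identify $K(\Qb^\circ)$ with the normal cone $N_\Cfrak(\Tb)$. Normal cones are translation invariant and the bijection $\Qb\mapsto\nu\Pb+\Qb$ maps $\overline{\Lscr_+}$ onto $\Cfrak$ with $\Qb-\Qb^\circ\mapsto\Tb'-\Tb$, so the defining inequalities $\innerprod{\Sigmab_K}{\Qb-\Qb^\circ}\leq0$ for all $\Qb\in\Lscr_+$ become precisely $\innerprod{\Sigmab_K}{\Tb'-\Tb}\leq0$ for all $\Tb'\in\Cfrak$. Since $\Cfrak$ is a cone containing $0$ and $2\Tb$, substituting these two choices of $\Tb'$ gives $\innerprod{\Sigmab_K}{\Tb}\geq0$ and $\innerprod{\Sigmab_K}{\Tb}\leq0$ respectively, hence $\innerprod{\Sigmab_K}{\Tb}=0$ for every $\Sigmab_K\in K(\Qb^\circ)$; equivalently $N_\Cfrak(\Tb)\subseteq\Tb^{\perp}$.

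Now take $\Sigmab_K\in K(\Qb^\circ)$. By \cref{lem_dual_cone} we have $-\Sigmab_K\in\overline{\Cfrak}_+$, so the matricial trigonometric moment representation (cf.\ \cite[Proposition~1, p.~1059]{Georgiou-06}) provides a finite nonnegative matricial measure $M_K$ satisfying \eqref{rep_normal_cone}. It remains only to check the complementarity condition $\trace\int_{\Tbb^d}(\nu\Psi^{-1}+Q^\circ)\,\d M_K=0$, i.e.\ $\trace\int_{\Tbb^d}T\,\d M_K=0$, and this now comes for free: expanding $T(e^{i\thetab})=\sum_{\kb\in\Lambda}T_\kb e^{-i\innerprod{\kb}{\thetab}}$, interchanging the finite sum with the integral, and using that $M_K$ is Hermitian and $\Lambda=-\Lambda$ so that $\int_{\Tbb^d}e^{-i\innerprod{\kb}{\thetab}}\,\d M_K=\bigl(\int_{\Tbb^d}e^{i\innerprod{\kb}{\thetab}}\,\d M_K\bigr)^{*}=-\Sigmab_{K,\kb}^{*}$, one obtains $\trace\int_{\Tbb^d}T\,\d M_K=-\sum_{\kb\in\Lambda}\trace(T_\kb\Sigmab_{K,\kb}^{*})=-\innerprod{\Tb}{\Sigmab_K}=0$, all these quantities being real by Hermitian symmetry and the last equality being the orthogonality just established. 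Since $\nu\Psi^{-1}+Q^\circ=T\geq0$ on $\Tbb^d$ and $M_K\geq0$, vanishing of this trace integral is equivalent to $M_K$ being supported on the kernel of $T$, i.e.\ on the zero set $\Zcal(\Qb^\circ)$, which is exactly the constraint analogous to \eqref{constraint_M_S}. This finishes the argument.

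I do not expect any real obstacle inside this proof: once one notices the affine structure, everything is routine. The genuine difficulty lies in the \emph{general} rational prior, where $\nu\Psi^{-1}+Q$ is a rational matrix function, $\Lscr_+$ is no longer an affine slice of $\Cfrak$, the normal cone need not be orthogonal to any fixed element, and a representing measure for $-\Sigmab_K$ supplied by the moment theorem need not concentrate on $\Zcal(\Qb^\circ)$; the autoregressive hypothesis $\Psi=P^{-1}$ is precisely what collapses all of these complications.
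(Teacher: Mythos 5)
Your proof is correct and follows essentially the same route as the paper: both hinge on \cref{lem_dual_cone} together with the moment representation of \cite{Georgiou-06}, and both exploit that the autoregressive hypothesis places $-\nu\Pb$ in the closure of $\Lscr_+$ (your test point $\Tb'=0$ is exactly the paper's choice $Q=-\nu P+\varepsilon I$ with $\varepsilon\to 0$). The only difference is organizational: you first derive the orthogonality $\innerprod{\Sigmab_K}{\nu\Pb+\Qb^\circ}=0$ in the finite-dimensional coefficient space and then transfer it through the representing measure, whereas the paper sandwiches the trace integral $\trace\int_{\Tbb^d}(\nu\Psi^{-1}+Q^\circ)\,\d M_K$ directly at the measure level.
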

\begin{proof}
	Again let us call $\Yb=-\Sigmab_K$ for convenience. By \cref{lem_dual_cone}, we know that there exists a nonnegative matricial measure such that $\Yb_{\kb} = \int_{\Tbb^d} e^{i\innerprod{\kb}{\thetab}} \, \d M_K$ for all $\kb\in\Lambda$. We can then rewrite the inner product $\innerprod{\Yb}{\Qb}=\trace\int_{\Tbb^d} Q\, \d M_K$. The condition for the normal cone means that $\trace\int_{\Tbb^d} (Q-Q^\circ)\, \d M_K\geq0$ for all $\Qb\in\Lscr_+$, which implies that $\forall \Qb\in\Lscr_+$,
	\begin{equation}\label{sandwich_M_K}
	\trace \int_{\Tbb^d} (\nu \Psi^{-1} + Q) \d M_K \geq \trace \int_{\Tbb^d} (\nu \Psi^{-1} + Q^\circ) \d M_K \geq 0.
	\end{equation}
	In particular, since now $\Psi=P^{-1}$, we can pick $Q=-\nu P + \varepsilon I$ for $\varepsilon>0$. Letting $\varepsilon\to0$, we get the desired trace-integral equality for the measure $M_K$. 
\end{proof}

In summary, when the prior has the form in the above proposition, we can apply \cite[Theorem~25.6]{rockafellar1970convex}, and see that
\begin{equation}
\zerob = \Sigmab - \Gamma(\Phi_\nu(\Qb^\circ)\d\mu + \d M_S) - \Gamma(\d M_K).
\end{equation}
Notice that here we have written the equality directly in terms of the vectors rather than {their} coordinates. In other words, the spectral measure $\Phi_\nu(\Qb^\circ)\d\mu + \d M_\s$ matches the give moments where the singular part $M_\s:=M_S+M_K$.
Moreover, we have the primal-dual complementarity
\begin{equation}\label{Primal-dual}
\Dcal_{1-\frac{1}{\nu}}(\d M,\Psi) = \Lcal_\nu(\d M,\Qb) \geq -J_\nu(\Qb).
\end{equation}
The equalities hold for $\d M=\Phi_\nu(\Qb^\circ)\d\mu + \d M_\s$ and $\Qb=\Qb^\circ$, and hence optimality follows.

\begin{remark}
	In the scalar case $(m=1)$, any object in the dual cone \eqref{dual_cone} admits an integral representation of the form $\sigma_\kb=\int_{\Tbb^d} e^{i\innerprod{\kb}{\thetab}} \d\lambda$ for $\kb\in\Lambda$ where the measure $\d\lambda$ is a sum of Dirac deltas \cite{lang1983spectral} and the number of impulses is determined by the cardinality of the index set $\Lambda$.
	{A similar characterization holds true for finite matricial covariance multisequences as reported in the recent paper \cite{Zhu-M2-LineSpec}.}
\end{remark}

\begin{remark}
	We briefly mention the case $\nu=1$ such that $\tau=1-\frac{1}{\nu}=0$. As reported in \cite{Z-15}, the divergence index $D_\tau$ in \eqref{tau_diverg} can be defined as $\tau\to 0$ via continuity, and one recovers the Itakura-Saito distance which in our formulation has the shape
	\begin{equation}\label{IS_dist}
	\Dcal_{0}(\d M,\Psi) = \int_{\Tbb^d} \trace \left\{ (\log\Psi-\log\Phi)\d\m + \Psi^{-1}\d M \right\} - m.
	\end{equation}
	A {discrete} version of the corresponding optimization problem has been studied in \cite{ZFKZ2019M2}. Following the lines in that paper and in \cref{sec:solution_dual} of the current work, one can show the existence and uniqueness of the solution to the dual problem when the spectrum is defined on the continuum $\Tbb^d$. However, in this specific case, we cannot devise an argument similar to those in \cref{prop_int_blowup} simply because we do not have the flexibility on the integer $\nu$. The main result is {summarized} in the next proposition.
\end{remark}

\begin{proposition}
	If \cref{assump_feasibility} and \cref{assump_Psi} hold, then the function
	\begin{equation}
	J_1(\Qb):=\langle\Qb,\Sigmab\rangle-\int_{\Tbb^d}\log\det(\Psi^{-1}+Q)\d\m
	\end{equation}
	is strictly convex over the feasible set $\Lscr_+$ and has a unique minimizer $\Qb^\circ$. If in addition the prior $\Psi=P^{-1}$, inverse of a strictly positive matrix polynomial with monomials indexed in $\Lambda$, then the problem to minimize 
	\eqref{IS_dist} over nonnegative matricial measures $\d M$ subject to the moment constraints has a solution of the form $(\Psi^{-1}+Q^\circ)^{-1}\d\m+\d M_\s\,$, 
	where $M_\s$ is singular {with respect to} the matricial measure $\m I$ and satisfies the equality $\trace \int_{\Tbb^d} (\Psi^{-1} + Q^\circ) \d M_\s = 0$.
\end{proposition}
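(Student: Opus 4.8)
The plan is to replay, with $\nu$ frozen at $1$ (equivalently $\tau=0$), the analysis of \cref{sec:duality} (duality), \cref{sec:solution_dual} (existence and uniqueness of the dual optimum), and \cref{sec:measure} (the singular correction); the discretized version in \cite{ZFKZ2019M2} is the blueprint for the $\tau=0$ specifics. First I would establish strict convexity of $J_1$ on $\Lscr_+$, mimicking \cref{prop_strict_convex}. The linear term $\langle\Qb,\Sigmab\rangle$ is irrelevant, so it suffices to show that $\Qb\mapsto-\log\det(\Psi^{-1}(e^{i\thetab})+Q(e^{i\thetab}))$ is strictly convex pointwise: $X\mapsto-\log\det X$ is strictly convex on Hermitian positive definite matrices and $\Qb\mapsto\Psi^{-1}(e^{i\thetab})+Q(e^{i\thetab})$ is an injective affine map (injectivity because the monomials are indexed in $\Lambda$), so the composition is strictly convex and integration preserves it; a boundary multiplier $\bar{\Qb}\in\partial\Lscr_+$ is handled, as in \cref{prop_strict_convex}, by excising the Lebesgue-null zero set $\Zcal(\bar{\Qb})$ of \cref{zero_set} from the domain of integration. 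Strict convexity gives uniqueness of a minimizer once existence is in hand.

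For existence I would run the program of \cref{sec:solution_dual} with one genuine change: the integral term of $J_1$ is no longer sign-definite, so boundedness from below cannot be read off as in \cref{inequal_J_nu} and must come from a growth comparison. \cref{lem_innerprod_inequal} applies verbatim (set $\nu=1$ in its proof), giving $\innerprod{\Qb}{\Sigmab}\ge\alpha$ on $\Lscr_+$. For a sequence with $\|\Qb_k\|\to\infty$ I would normalize and repeat the argument of \cref{lem_unbounded_Q} line by line --- using the positivity of $M'_{0,\lambda}$ on the ball $B$ of \cref{assump_feasibility} together with \cite[Lemma~1]{ringh2015multidimensional} --- to extract a subsequence with $\innerprod{\Qb_{k_j}^0}{\Sigmab}\to\eta>0$, the only modification being that the trivial bound $J_\nu(\Qb)\ge\innerprod{\Qb}{\Sigmab}$ is replaced by $J_1(\Qb)\ge\innerprod{\Qb}{\Sigmab}-O(\log\|\Qb\|)$ (since the eigenvalues of $\Psi^{-1}+Q$ are $O(\|\Qb\|)$, so $\int_{\Tbb^d}\log\det(\Psi^{-1}+Q)\,\d\m$ grows at most like $\log\|\Qb\|$), which still forces $J_1(\Qb_{k_j})\to\infty$. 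Lower semicontinuity at a boundary point follows from Fatou's lemma as in \cref{lem_lower_semicon}, after noting that $-\log\det(\Psi^{-1}+Q)$ is bounded below near $\bar{\Qb}$ (eigenvalues bounded above) and bounded below by $0$ near $\Zcal(\bar{\Qb})$. Coercivity plus lower semicontinuity makes the sublevel sets compact, so $J_1$ attains its unique minimum $\Qb^\circ$ on $\Lscr_+$; and if $\Qb^\circ\in\interior(\Lscr_+)$ the stationarity condition \cref{delta_J_nu} at $\nu=1$ --- where \cref{Phi_nu} collapses to $\Phi_1=(\Psi^{-1}+Q)^{-1}$ --- shows that $(\Psi^{-1}+Q^\circ)^{-1}\d\m$ already solves the moment equations \cref{moment_eqns}, so no singular part is needed.

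If instead $\Qb^\circ\in\partial\Lscr_+$ --- which cannot be excluded here, since there is no analogue of \cref{prop_int_blowup} when $\nu$ is fixed --- I would argue as in \cref{sec:measure}: by \cite[Theorem~25.6]{rockafellar1970convex}, $\zerob\in\partial J_1(\Qb^\circ)=\mathrm{cl}\,(\mathrm{conv}\,S(\Qb^\circ))+K(\Qb^\circ)$. The set $S(\Qb^\circ)$ is described via Banach--Alaoglu: by \cref{delta_J_nu} at $\nu=1$, $\nabla J_1(\Qb)$ corresponds to $\Sigmab-\Gamma((\Psi^{-1}+Q)^{-1}\d\m)$, so along an interior sequence $\Qb_k\to\Qb^\circ$ with convergent gradients the matricial total variations of $(\Psi^{-1}+Q_k)^{-1}\d\m$ are bounded, and a weak$^*$ limit has the form $(\Psi^{-1}+Q^\circ)^{-1}\d\m+\d M_S$ with $M_S\ge0$ carried by $\Zcal(\Qb^\circ)$, i.e.\ $\trace\int_{\Tbb^d}(\Psi^{-1}+Q^\circ)\,\d M_S=0$. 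For $K(\Qb^\circ)$, \cref{lem_dual_cone} together with the representation \cref{rep_normal_cone} yields a nonnegative measure $M_K$ with $-\Sigmab_{K,\kb}=\int_{\Tbb^d}e^{i\innerprod{\kb}{\thetab}}\,\d M_K$, and the device of the preceding proposition applies because $\Psi=P^{-1}$: inserting $Q=-P+\varepsilon I\in\interior(\Lscr_+)$ (whose monomials are indexed in $\Lambda$) into \cref{sandwich_M_K} with $\nu=1$ and letting $\varepsilon\to0$ forces $\trace\int_{\Tbb^d}(\Psi^{-1}+Q^\circ)\,\d M_K=0$. Setting $M_\s:=M_S+M_K$, the decomposition of \cite[Theorem~25.6]{rockafellar1970convex} becomes $\zerob=\Sigmab-\Gamma((\Psi^{-1}+Q^\circ)^{-1}\d\m+\d M_\s)$, so this measure matches the moments; it is singular with respect to $\m I$ because $M_\s$ is carried by the Lebesgue-null set $\Zcal(\Qb^\circ)$, it satisfies $\trace\int_{\Tbb^d}(\Psi^{-1}+Q^\circ)\,\d M_\s=0$, and primal optimality follows from the complementarity \cref{Primal-dual} with $\nu=1$, which becomes an equality at this primal point and $\Qb^\circ$.

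The main obstacles are the two places where the $\nu=1$ case is not a mechanical transcription. One is boundedness below of $J_1$: the logarithmic term carries no sign, so it must be balanced against the linear term $\innerprod{\Qb}{\Sigmab}$ through growth rates, and one must separately certify the lower bound on $-\log\det(\Psi^{-1}+Q)$ needed to invoke Fatou. The other, more serious one, is that without a counterpart to \cref{prop_int_blowup} a boundary dual optimum is genuinely possible, so the subdifferential calculus of \cref{sec:measure} is unavoidable; and the construction of $M_K$ with the required trace-orthogonality seems to rely essentially on the autoregressive structure $\Psi=P^{-1}$ (which makes $-P+\varepsilon I$ an admissible multiplier), so lifting the restriction on the prior looks like the actual open difficulty.
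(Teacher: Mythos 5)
Your proposal is correct and follows essentially the same route the paper intends for this result: specialize the duality, existence/uniqueness, and singular-part analyses of \cref{sec:duality}, \cref{sec:solution_dual}, and \cref{sec:measure} to $\nu=1$ (as the remark preceding the proposition prescribes), and exploit the structure $\Psi=P^{-1}$ exactly as in the preceding proposition to obtain the trace-orthogonal measure $M_K$. The adjustments you make explicit that the paper leaves implicit --- the $O(\log\|\Qb\|)$ growth bound replacing \cref{inequal_J_nu} in the coercivity argument and the uniform lower bound on $-\log\det(\Psi^{-1}+Q)$ needed to invoke Fatou --- are the right ones.
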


\section{Conclusions}
\label{sec:conclusions}
We have considered the problem of estimating a M$^2$ (multidimensional and multivariate) spectral density of a random field using the covariance extension approach. The latter chooses as estimate the closest solution to the prior according to the $\tau$-divergence, and  matching the given set of covariances $\{\Sigma_\kb:\; \kb\in\Lambda\}$ computed from the data. The proposed paradigm is very flexible because through the parameter $\tau=1-\nu^{-1}$ with $\nu$ integer, we can impose some properties on the spectral estimator. More precisely, the corresponding optimization problem admits {a} solution which is not necessarily a spectral density, but rather a spectral measure that may contain a singular part. On the other hand, taking $\nu$ sufficiently large, such {a} solution is unique and is {guaranteed to be} a {rational} spectral density. {In future work we will consider the problem of connecting such spectra to system realization, e.g., using sum-of-squares representations (cf. \cite{dumitrescu2007positive}).}

\appendix

\section{Ancillary results}\label{sec:appendix_A}
The next theorem concerns the \emph{Lebesgue decomposition} of nonnegative matricial measures.
\begin{theorem}[A variant of Cram\'{e}r's theorem]\label{thm_Cramer_variant}
	Let $M$ be a nonnegative matricial measure, and let $\m$ be the Lebesgue measure on the family of Borel subsets of $\Tbb^d$. Then there exist unique matricial measures $M_\a$ and $M_\s$ such that $M = M_\a + M_\s$, $M_\a$ is strongly absolutely continuous with respect to $\m I$, $M_\s$ and $\m I$ are mutually singular, and $M_\a$ and $M_\s$ are nonnegative matricial measures.
\end{theorem}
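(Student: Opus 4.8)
The plan is to reduce the matricial statement to the classical scalar Lebesgue decomposition by first passing to a single dominating scalar measure. Define the nonnegative scalar measure $\lambda := \trace M$, obtained by summing the diagonal entries of $M$; since $M \geq 0$ the matrix $M(E)$ is positive semidefinite for every Borel set $E$, so $\lambda$ is a genuine finite nonnegative measure on the Borel subsets of $\Tbb^d$. First I would show that every entry $M_{jk}$ is absolutely continuous with respect to $\lambda$. The diagonal entries are dominated by $\lambda$ outright, and the off-diagonal entries are controlled through the positivity of $M$: the Cauchy--Schwarz inequality for the positive semidefinite matrix $M(E)$ gives $|M_{jk}(E)|^2 \leq M_{jj}(E)\,M_{kk}(E) \leq \lambda(E)^2$ on every Borel set $E$, so $\lambda(E)=0$ forces $M_{jk}(E)=0$, which is exactly $M_{jk}\ll\lambda$.

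With absolute continuity of all entries in hand, the scalar Radon--Nikod\'ym theorem applied entrywise supplies an integrable Hermitian density $M' := \d M / \d\lambda$. That $M'$ may be chosen positive semidefinite $\lambda$-almost everywhere follows by testing against a countable dense family of vectors: for fixed $\vb$ the scalar measure $\vb^* M \vb$ is nonnegative and absolutely continuous with respect to $\lambda$, so its derivative $\vb^* M' \vb \geq 0$ $\lambda$-a.e., and a density argument yields $M'\geq0$ $\lambda$-a.e.\ simultaneously. Thus $M(E) = \int_E M' \,\d\lambda$ for all Borel $E$. Next I would apply the scalar Lebesgue decomposition to $\lambda$ relative to $\m$, writing $\lambda = \lambda_\a + \lambda_\s$ with $\lambda_\a \ll \m$ and $\lambda_\s \perp \m$, the latter meaning $\lambda_\s$ is concentrated on a Borel set $N$ with $\m(N)=0$. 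I then define the two candidate pieces by
\begin{equation}
M_\a(E) := \int_E M' \,\d\lambda_\a, \qquad M_\s(E) := \int_E M' \,\d\lambda_\s,
\end{equation}
so that $M = M_\a + M_\s$ by linearity of the integral.

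Verifying the four required properties is then routine. Both $M_\a$ and $M_\s$ are nonnegative Hermitian measures because $M'\geq0$ and $\lambda_\a,\lambda_\s$ are nonnegative. If $\m(E)=0$ then $\lambda_\a(E)=0$, whence $M_\a(E)=0$, giving the strong absolute continuity of $M_\a$ with respect to $\m I$; and $M_\s$ is concentrated on the $\m$-null set $N$, so $M_\s$ and $\m I$ are mutually singular. For uniqueness, suppose $M = M_\a + M_\s = \tilde M_\a + \tilde M_\s$ are two decompositions with the stated properties. The difference $D := M_\a - \tilde M_\a = \tilde M_\s - M_\s$ is simultaneously absolutely continuous and singular with respect to $\m I$; applying the scalar fact ``a measure that is both absolutely continuous and singular vanishes'' to each scalar measure $\vb^* D \vb$, and hence to every entry of $D$, gives $D=0$, so $M_\a = \tilde M_\a$ and $M_\s = \tilde M_\s$.

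I expect the main obstacle to be the passage through the dominating measure, namely rigorously establishing that the complex off-diagonal entries are absolutely continuous with respect to $\lambda$ and, more delicately, that the assembled Radon--Nikod\'ym derivative $M'$ can be taken Hermitian and positive semidefinite $\lambda$-almost everywhere on a single common null set. This is precisely where the hypothesis $M\geq0$ must be exploited and is the content that distinguishes the matricial theorem from a naive entrywise application of the scalar result; once the density $M'$ is secured, the remainder is a transcription of the classical decomposition and the standard ``absolutely continuous $\cap$ singular $=0$'' uniqueness argument.
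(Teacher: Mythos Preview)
Your proof is correct. The paper itself does not supply an independent argument for this theorem: the accompanying remark simply observes that the statement is Corollary~6.15 of Robertson's paper on matricial measures, a direct consequence of the general Lebesgue-decomposition Theorem~6.14 there, the only change being the ambient space ($\Tbb^d$ in place of $\Rbb$). Your approach is the standard self-contained one---pass to the scalar trace measure $\lambda=\trace M$, obtain a positive-semidefinite density $M'=\d M/\d\lambda$ via Radon--Nikod\'ym and a countable-dense-vector argument, decompose $\lambda$ classically as $\lambda_\a+\lambda_\s$, and push the two pieces back through $M'$. This is in substance how Robertson's general theorem is established, so the routes ultimately coincide; yours has the virtue of being explicit rather than a citation. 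The only point worth double-checking is that ``strongly absolutely continuous with respect to $\m I$'' and ``mutually singular with $\m I$'' are Robertson's technical notions, not the bare scalar ones: you should verify that possessing a density with respect to $\m$, respectively being concentrated on a $\m$-null set, match those definitions when the reference measure is scalar-times-identity. They do (and the paper's own gloss on strong absolute continuity, namely that it implies $M_\a(B)=\int_B\Phi\,\d\m$, is exactly what your construction delivers), but since the paper defers to Robertson for the precise formulations it is worth saying one sentence about this in a polished write-up.
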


The definition of strong absolute continuity for two matricial measures can be found in \cite[p.~361]{robertson1968decomposition}. In particular, under the context of \cref{thm_Cramer_variant}, it implies the existence of a Radon-Nikod\'{y}m derivative, i.e., there exists an integrable positive semidefinite matrix-valued function $\Phi$ such that
\begin{equation}
M_\ab(B) = \int_B \Phi \, \d\m
\end{equation}
for all Borel {sets $B\subset\Tbb^d$}. The function $\Phi$, which is often known as the \emph{density}, coincides with the element-wise Radon-Nikod\'{y}m derivative of $M_\ab$ {with respect to} the Lebesgue measure.

The concept of two matricial measures being mutually singular is defined in \cite[Section~6]{robertson1968decomposition}. Concerning \cref{thm_Cramer_variant}, it means that there {exists} a nonnegative measure $\lambda$ on $\Tbb^d$ such that both $M_\s$ and $\m I$ are absolutely continuous {with respect to} $\lambda$ (in an element-wise sense), and whenever $\frac{\d\m}{\d\lambda}(\thetab)\neq 0$, $M'_{\s,\lambda}(\thetab)=0$ up to a $\lambda$-null set.
Here $M'_{\s,\lambda}$ is the element-wise Radon-Nikod\'{y}m derivative of $M_\s$ {with respect to} $\lambda$.

\begin{remark}
	In the original theorem of Cram\'{e}r \cite{cramer1940theory}, $\m$ is the Lebesgue measure on the real line. The above variant holds true, as its proof is the same as that of \cite[Corollary~6.15]{robertson1968decomposition}, which follows directly from the more general result Theorem~6.14 in the same paper.
\end{remark}

\begin{lemma}\label{lem_tr_det}
	For a positive semidefinite matrix $A\in\Cbb^{m\times m}$, {the inequality}
	\begin{equation}\label{inequal_tr_det}
	\frac{1}{m}\trace(A^n) \geq (\det A)^{n/m}
	\end{equation}
	{holds} for any positive integer $n$.
\end{lemma}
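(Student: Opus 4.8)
The statement to prove is \cref{lem_tr_det}: for a Hermitian positive semidefinite matrix $A\in\Cbb^{m\times m}$ and any positive integer $n$,
\begin{equation*}
\frac{1}{m}\trace(A^n) \geq (\det A)^{n/m}.
\end{equation*}

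\textbf{Approach.} The plan is to diagonalize $A$ and reduce the matrix inequality to a scalar inequality among the eigenvalues, then invoke the arithmetic–geometric mean (AM–GM) inequality. Since $A$ is Hermitian positive semidefinite, write $A = U\diag\{\lambda_1,\dots,\lambda_m\}U^*$ with $\lambda_j\geq 0$ and $U$ unitary. Then $\trace(A^n)=\sum_{j=1}^m \lambda_j^n$ and $\det A = \prod_{j=1}^m \lambda_j$, so the claimed inequality becomes
\begin{equation*}
\frac{1}{m}\sum_{j=1}^m \lambda_j^n \geq \left(\prod_{j=1}^m \lambda_j\right)^{n/m} = \left(\prod_{j=1}^m \lambda_j^n\right)^{1/m}.
\end{equation*}
This is exactly AM–GM applied to the $m$ nonnegative reals $\lambda_1^n,\dots,\lambda_m^n$: the arithmetic mean of these numbers is at least their geometric mean. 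Both sides are well defined since all quantities are nonnegative (if some $\lambda_j=0$, the right-hand side is zero and the inequality is trivial).

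\textbf{Key steps, in order.} First, record that $A$ positive semidefinite Hermitian admits a unitary diagonalization with nonnegative real eigenvalues, and that $\trace$ and $\det$ are basis-independent, so $\trace(A^n)=\sum_j\lambda_j^n$ and $\det A=\prod_j\lambda_j$. Second, substitute these into the desired inequality to obtain the scalar form above. Third, apply AM–GM to the nonnegative numbers $\{\lambda_j^n\}_{j=1}^m$; equality holds iff all $\lambda_j$ are equal, though we do not need the equality case here.

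\textbf{Main obstacle.} There is essentially no obstacle: the only care needed is the degenerate case $\det A = 0$ (some eigenvalue vanishes), where the right-hand side is zero while the left-hand side $\frac1m\trace(A^n)\geq 0$, so the inequality holds trivially; and the observation that raising to the $n$-th power preserves the order on nonnegative reals so that $\{\lambda_j^n\}$ are again legitimate inputs to AM–GM. The content is entirely the classical AM–GM inequality together with the spectral theorem.
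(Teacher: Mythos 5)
Your proof is correct and follows essentially the same route as the paper: both reduce the claim to the AM--GM inequality on the eigenvalues of $A^n$ (the paper phrases this as applying the case $n=1$ to the positive semidefinite matrix $A^n$, which is the same computation). No gaps.
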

\begin{proof}
	Let $A$ have eigenvalues $\lambda_k\geq0$, $k=1,\dots,m$. Given the arithmetic-geometric mean inequality
	\begin{equation}
	\left(\prod_{k=1}^m \lambda_k\right)^{1/m} \leq \frac{1}{m} \sum_{k=1}^m \lambda_k
	\end{equation}
	that holds for nonnegative real numbers, we have that
	\begin{equation}
	\frac{1}{m} \trace A \geq \left(\det A\right)^{1/m},
	\end{equation}
	and more generally,
	\begin{equation}
	\frac{1}{m} \trace(A^n) \geq \left[\det \left(A^n\right)\right]^{1/m} = \left(\det A\right)^{n/m}.
	\end{equation}
\end{proof}

\begin{proposition}\label{prop_Johan}
Let $p:\Rbb^n\to\Rbb$ be a polynomial with $p(\thetab_0)=0$ and assume that $p$ is nonnegative in a $\thetab_0$-centered ball $B_\varepsilon(\thetab_0)$ for some radius $\varepsilon>0$. Then if
$m\ge n/2$, we have that 
\[
\int_{B_\varepsilon(\thetab_0)}p(\thetab)^{-m}\d\m(\thetab)=\infty.
\]	
\end{proposition}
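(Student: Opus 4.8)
The plan is to reduce the statement to a one-dimensional (radial) integral, for which divergence is classical, by controlling the order of vanishing of $p$ at $\theta_0$. First I would note that, since $p$ is a polynomial, it vanishes at $\theta_0$ to some finite even order: writing $p$ in a Taylor expansion around $\theta_0$, let $2k$ be the smallest degree of a nonzero homogeneous component (it must be even and positive, since $p\ge 0$ near $\theta_0$ forces the leading homogeneous form to be nonnegative, hence of even degree, and $p(\theta_0)=0$ rules out $k=0$). Thus on a possibly smaller ball we have an upper bound $p(\theta)\le C\,|\theta-\theta_0|^{2k}$ for a constant $C>0$, simply because each monomial in the Taylor expansion of degree $\ge 2k$ is dominated by $|\theta-\theta_0|^{2k}$ on a bounded set. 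This gives the pointwise lower bound $p(\theta)^{-m}\ge C^{-m}|\theta-\theta_0|^{-2km}$.

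Next I would pass to polar coordinates centered at $\theta_0$. With $r=|\theta-\theta_0|$ the volume element contributes a factor $r^{n-1}\,\d r$ (times the surface measure on the unit sphere, which integrates to a finite positive constant), so
\begin{equation}
\int_{B_\epsilon(\theta_0)} p(\theta)^{-m}\,\d\m(\theta) \;\ge\; c \int_0^{\epsilon'} r^{-2km}\,r^{n-1}\,\d r \;=\; c\int_0^{\epsilon'} r^{\,n-1-2km}\,\d r
\end{equation}
for constants $c>0$ and $\epsilon'>0$. This radial integral diverges precisely when $n-1-2km\le -1$, i.e.\ when $2km\ge n$, which holds as soon as $m\ge n/2$ since $k\ge 1$. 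That is exactly the hypothesis, so the integral is infinite.

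The one genuinely delicate point is the very first step: establishing that $p$ has a \emph{finite} order of vanishing $2k$ at $\theta_0$ and that this order is \emph{even}, together with the resulting upper bound $p(\theta)\le C|\theta-\theta_0|^{2k}$ near $\theta_0$. Finiteness is immediate because a polynomial is not identically zero in any neighborhood unless it is the zero polynomial (which is excluded, or trivially makes the integral infinite). The parity of the order is where nonnegativity enters: if the lowest-degree homogeneous part $p_{\deg}$ of $p$ had odd degree $\ell$, then $p_{\deg}(-v)=-p_{\deg}(v)$ for any direction $v$, so along the ray $\theta_0+tv$ we would get $p(\theta_0+tv)=p_{\deg}(v)t^\ell+o(t^\ell)$ changing sign as $t$ crosses $0$, contradicting $p\ge 0$ on $B_\epsilon(\theta_0)$. (One should also check $p_{\deg}\not\equiv 0$ on the sphere, which is automatic since it is the lowest nonzero homogeneous component.) Everything after that is the routine polar-coordinate estimate above; the only care needed is shrinking $\epsilon$ to $\epsilon'$ so that the Taylor remainder bound is valid, which costs nothing since divergence of the radial integral is a local statement at $r=0$.
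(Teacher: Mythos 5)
Your proof is correct and follows essentially the same route as the paper's: establish a local upper bound $p(\theta)\le C\|\theta-\theta_0\|^{2k}$ with $2k\ge 2$ (the paper gets the quadratic case via $\nabla p(\theta_0)=0$ and a Hessian bound, you via the lowest nonzero homogeneous Taylor component, which must have even degree by nonnegativity), then pass to polar coordinates and observe that $\int_0^{\epsilon'} r^{n-1-2km}\,\d r$ diverges when $m\ge n/2$. The only difference is that your version tracks the actual order of vanishing $2k$, giving a marginally sharper criterion, but the mechanism and conclusion are the same.
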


\begin{proof}
Note that since $p(\thetab)\ge 0$, the gradient vanishes at $\thetab_0$, i.e., $\nabla p(\thetab_0)=0$. Let the Hessian at $\thetab_0$ be $H=\Delta p(\thetab_0)$ and let $\alpha{>0}$ be such that $2 \alpha I>H$. By {the} continuity of the Hessian, there {exists} $\varepsilon'$ with $0<\varepsilon'<\varepsilon$ such that $2 \alpha I>\Delta p(\thetab)$ in $B_{\varepsilon'}(\thetab_0)$, and thus $p(\thetab)\le \alpha \|\thetab-\thetab_0\|^2$ in $B_{\varepsilon'}(\thetab_0)$.
Then we have 
\begin{align*}
\int_{B_\varepsilon(\thetab_0)}p(\thetab)^{-m}\d\m(\thetab)&\ge \alpha^{-m} \int_{B_{\varepsilon'}(\thetab_0)}\|\thetab-\thetab_0\|^{-2m}\d\m(\thetab)\\
& = {\alpha^{-m}} \int_{r=0}^{\varepsilon'}  r^{-2m}  S_{n-1} r^{n-1} \d r\\
& = {\alpha^{-m}} S_{n-1} \int_{r=0}^{\varepsilon'}  r^{n-1-2m} \d r,
\end{align*}
where $S_{n-1}=n\pi^{n/2}/\Gamma(1+n/2)$ is the area of a {unit} hypersphere in $\Rbb^n$.
Note that the integral on the right-hand side diverges if and only if the exponent $n-1-2m$ is less or equal to $-1$, i.e., when $m\ge n/2$.
\end{proof}

\begin{proposition}[Riesz representation]\label{Riesz_represent}
	A bounded real-valued linear functional $L$ on the space of Hermitian matrix-valued continuous functions on $\Tbb^d$ admits a representation
	\begin{equation}
	L(\Phi) = \trace \int_{\Tbb^d} \Phi \, \d M,
	\end{equation}
	where $M$ is a Hermitian matricial measure of bounded matricial variation.
\end{proposition}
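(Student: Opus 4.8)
The plan is to reduce the matricial claim to the classical scalar Riesz--Markov--Kakutani representation theorem on the compact metric space $\Tbb^d$. The key structural observation is that the Hermitian matrices $\Hfrak_m$ form a \emph{real} vector space of dimension $m^2$, equipped with the real inner product $\innerprod{A}{B}=\trace(AB)$. First I would fix an orthonormal basis $\{\Xb_\ell\}_{\ell=1}^{m^2}$ of $\Hfrak_m$ for this inner product; a concrete choice is the diagonal units $E_{jj}$ together with $\tfrac{1}{\sqrt2}(E_{jk}+E_{kj})$ and $\tfrac{i}{\sqrt2}(E_{jk}-E_{kj})$ for $j<k$, all of which are Hermitian. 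Any $\Phi\in C(\Tbb^d;\Hfrak_m)$ then admits the finite pointwise expansion
\begin{equation}
\Phi(\thetab)=\sum_{\ell=1}^{m^2}\phi_\ell(\thetab)\,\Xb_\ell,\qquad \phi_\ell(\thetab):=\trace\bigl(\Phi(\thetab)\Xb_\ell\bigr),
\end{equation}
where each coefficient $\phi_\ell$ is a genuine real-valued continuous function on $\Tbb^d$, since $\Phi$ is continuous and $\Xb_\ell$ is a constant Hermitian matrix.

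Next I would split $L$ along this basis. For each fixed $\ell$, define the scalar functional $L_\ell(f):=L(f\,\Xb_\ell)$ on $C(\Tbb^d;\Rbb)$. This is real-linear, and bounded because $|L_\ell(f)|\le \|L\|\,\|\Xb_\ell\|\,\|f\|_\infty$, the matrix $\Xb_\ell$ being constant. The scalar Riesz--Markov--Kakutani theorem on the compact space $\Tbb^d$ therefore yields, for each $\ell$, a finite regular signed Borel measure $\nu_\ell$ with $L_\ell(f)=\int_{\Tbb^d}f\,\d\nu_\ell$. Using the expansion above together with the linearity of $L$ gives
\begin{equation}
L(\Phi)=\sum_{\ell=1}^{m^2}L(\phi_\ell\,\Xb_\ell)=\sum_{\ell=1}^{m^2}\int_{\Tbb^d}\phi_\ell\,\d\nu_\ell=\sum_{\ell=1}^{m^2}\int_{\Tbb^d}\trace(\Phi\,\Xb_\ell)\,\d\nu_\ell.
\end{equation}

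Finally I would assemble the matricial measure by setting $\d M:=\sum_{\ell=1}^{m^2}\Xb_\ell\,\d\nu_\ell$, that is, $M_{pq}:=\sum_\ell (\Xb_\ell)_{pq}\,\nu_\ell$. Because each $\nu_\ell$ is real and each $\Xb_\ell$ is Hermitian, the entries satisfy $M_{qp}=\overline{M_{pq}}$, so $M$ is a Hermitian matricial measure; and since each $\nu_\ell$ has finite total variation, every entry $M_{pq}$ is a finite complex measure, so $M$ has bounded matricial variation. Substituting this definition into the last display and commuting the (finite) sum with the integral yields
\begin{equation}
\trace\int_{\Tbb^d}\Phi\,\d M=\sum_{\ell=1}^{m^2}\int_{\Tbb^d}\trace(\Phi\,\Xb_\ell)\,\d\nu_\ell=L(\Phi),
\end{equation}
which is the desired representation. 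I do not expect a serious analytic obstacle: the essential content is entirely carried by the scalar Riesz theorem, and the matricial wrapper is finite-dimensional linear-algebra bookkeeping. The only points requiring genuine care are confirming that the coefficient functions $\phi_\ell$ are continuous (immediate from continuity of $\Phi$ and constancy of $\Xb_\ell$) and that the reassembled object is Hermitian with finite-variation entries (both routine, given the choice of a \emph{real} Hermitian basis and the finiteness of the $\nu_\ell$). The mild subtlety worth flagging is that one must verify the reassembled measure represents $L$ on \emph{all} of $C(\Tbb^d;\Hfrak_m)$ and not merely on the basis directions, which is precisely what the finite expansion $\Phi=\sum_\ell\phi_\ell\Xb_\ell$ secures.
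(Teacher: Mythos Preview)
Your proposal is correct and follows essentially the same approach as the paper: fix an orthonormal basis of $\Hfrak_m$, expand $\Phi$ in that basis, apply the scalar Riesz theorem to each coordinate functional $L_\ell(f)=L(f\Xb_\ell)$, and reassemble the resulting signed measures into a Hermitian matricial measure $\d M=\sum_\ell \Xb_\ell\,\d\nu_\ell$. Your write-up is in fact slightly more explicit than the paper's (you give a concrete basis, verify boundedness of $L_\ell$, and check the Hermitian symmetry of $M$), but the argument is the same.
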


\begin{proof}
	The assertion follows directly from the Riesz representation theorem for real-valued continuous functions. We first fix an orthonormal basis $\{X_k\}_{k=1}^{N}$ for the space of Hermitian matrices (of a fixed dimension), so that each Hermitian matrix-valued continuous function $\Phi$ can be written as $\Phi = \sum_{k=1}^{N} f_k X_k$,
	where the coordinates $f_k$ are real-valued continuous functions. Next, identify $L_k(f) := L(f X_k)$ as a linear functional on the space of continuous functions on $\Tbb^d$. Then by the scalar version of Riesz representation \cite[Theorem 6.19]{rudin1987real}, we have	
	\begin{equation}
	L_k(f) = L(f X_k) = \int_{\Tbb^d} f\, \d\lambda_k,
	\end{equation}
	where $\lambda_k$ is a regular Borel measure of bounded variation. Define $\d M:=\sum_{k=1}^{N} X_k \d\lambda_k$, and one can verify that
	\begin{equation}
	\begin{split}
	\trace \int_{\Tbb^d} \Phi \, \d M
	 & = \int_{\Tbb^d} \sum_{k=1}^{N} f_k \d\lambda_k \\
	 & = \sum_{k=1}^{N} L(f_k X_k) = L(\Phi),
	\end{split}
	\end{equation}
	where all the cross terms vanish because of the orthonormality of the basis matrices. The boundedness of such a measure $M$ follows from the boundedness of each scalar measure $\lambda_k$. It is not difficult to show that the norm of $L$ is upper bounded by $\trace |M|(\Tbb^d)$ where $|M|$ denotes the matricial total variation of $M$ (cf.~\cite{robertson1968decomposition}).
\end{proof}

\bibliographystyle{siamplain}
\bibliography{references}

\end{document}